\newtheorem{theorem}{Theorem}
\newtheorem{corollary}[theorem]{Corollary}
\newtheorem{definition}[theorem]{Definition}
\newtheorem{lemma}[theorem]{Lemma}
\newtheorem{proposition}[theorem]{Proposition}
\newenvironment{proof}[1][Proof]{\noindent\textbf{#1.} }{\ \rule{0.5em}{0.5em}}
\def\dom{\mathop{\mathrm{dom}}\nolimits}
\def\im{\mathop{\mathrm{im}}\nolimits}
\def\ker{\mathop{\mathrm{ker}}\nolimits}
\def\rank{\mathop{\mathrm{rank}}\nolimits}
\def\id{\mathop{\mathrm{id}}\nolimits}
\def\R{\mathbb R}
\def\T{\mathcal{T}}
\def\S{\mathcal{S}}
\def\E{\mathcal{E}}
\def\O{\mathcal{O}}
\def\OP{\mathcal{OP}}
\title{On relative ranks of the semigroup of orientation-preserving transformations on infinite chain with restricted range}
\author{Ilinka Dimitrova\footnote{The author gratefully acknowledges support of DAAD, within the funding programme "Research Stays for University Academics and Scientists, 2019" number 57442043.} \\
\textit{Faculty of Mathematics and Natural Science}\\
\textit{South-West University "Neofit Rilski"}\\
\textit{2700 Blagoevgrad, Bulgaria}\\
\textit{e-mail: ilinka\_dimitrova@swu.bg}\\
~~\\
J\"{o}rg Koppitz \\
\textit{Institute of Mathematics and Informatics}\\
\textit{Bulgarian Academy of Sciences}\\
\textit{1113 Sofia, Bulgaria}\\
\textit{e-mail: koppitz@math.bas.bg}\\
~~\\
\textit{Institute of Mathematics}\\
\textit{Potsdam University}\\
\textit{Potsdam, 14469, Germany}\\
\textit{koppitz@uni-potsdam.de}\\}
\begin{document}

\maketitle

\begin{abstract}
Let $X$ be an infinite linearly ordered set and let $Y$ be a nonempty subset of $X$.
We calculate the relative rank of the semigroup $\OP(X,Y)$ of all orientation-preserving transformations on $X$ with restricted range $Y$
modulo the semigroup $\O(X,Y)$ of all order-preserving transformations on $X$ with restricted range $Y$. For $Y = X$, we characterize the relative generating
sets of minimal size.
\end{abstract}

\textit{Key words:} transformation semigroups on infinite chain, restricted range, order-preserving
transformations, orientation-preserving transformations, relative rank.

2020 Mathematics Subject Classification: 20M20
\\

\section{Introduction and Preliminaries}

Let $S$ be a semigroup. The \textit{rank} of $S$ (denoted by $\rank S$) is defined to be the minimal number of elements of a generating set of $S$. The ranks of various well known semigroups have been calculated \cite{GH, GH1, HMcF}.
For a set $A \subseteq S$, the \textit{relative rank} of $S$ modulo $A$, denoted by $\rank (S : A)$, is the minimal cardinality of a set $B\subseteq S$ such that $A \cup B$ generates $S$. It follows immediately from the definition that $\rank (S : \emptyset) = \rank S$, $\rank ( S : S) = 0$, $\rank (S : A ) = \rank (S : \langle A \rangle )$ and $\rank (S : A) = 0$ if and only if $A$ is a generating set for $S$.
The relative rank of a semigroup modulo a suitable set was first introduced by Ru\v skuc \cite{Ruskuc} in order to describe the generating sets of semigroups with infinite rank.

A total (i.e., a linear) order $<$ on a set $X$ is said to be \textit{dense} if, for all $x, y \in X$ with $x < y$, there is a $z \in X$ such that $x < z < y$.
Let $X$ be a densely infinite linearly ordered set.
We denote by $\T(X)$ the monoid of all full transformations of $X$ (under composition).
In \cite{HRH}, Howie, Ru\v skuc, and Higgins  considered the relative ranks of the monoid $\T(X)$, where $X$ is an infinite set, modulo some distinguished subsets of $\T(X)$.
They showed that $\rank (\T(X) : \S(X)) = 2$, $\rank (\T(X) : \E(X)) = 2$ and $\rank (\T(X) : J) = 0$, where $\S(X)$ is the symmetric group on $X$, $\E(X)$ is the set of all idempotent transformations on $X$, and $J$ is the top $\mathcal{J}$-class of $\T(X)$, i.e., $J=\{\alpha\in\T(X) : |X\alpha|=|X|\}$.

A function $f : A \rightarrow X$ from a subchain $A$ of $X$ into $X$ is said to be \textit{order-preserving} if $x \leq y$ implies $xf \leq yf$, for all $x, y \in A$. Notice that, given two subchains $A$ and $B$ of $X$ and an order-isomorphism (i.e., an order-preserving bijection) $f : A \rightarrow B$, the inverse function $f^{-1} : B \rightarrow A$ is also an order-isomorphism. In this case, the subchains $A$ and $B$ are called \textit{order-isomorphic}. We denote by $\O(X)$ the submonoid of $\T(X)$ of all order-preserving transformations of $X$.
The relative rank of $\T(X)$ modulo $\O(X)$ was considered by Higgins, Mitchell, and Ru\v{s}kuc \cite{HMR}.
They showed that $\rank(\T(X) : \O(X)) = 1$, when $X$ is an arbitrary countable chain or an arbitrary well-ordered set,
while $\rank(\T(\R) : \O(\R))$ is uncountable, by considering the usual order of the set $\R$ of real numbers.
In \cite{DFK}, Dimitrova, Fernandes, and Koppitz studied the relative rank of the semigroup $\O(X)$ modulo the top $\mathcal{J}$-class of $\O(X)$,
for an infinite countable chain $X$.

A generalization of the concept of an order-preserving transformation is the
concept of an orientation-preserving transformation, which was introduced in
1998 by McAlister \cite{McA} and, independently, one year later by Catarino and
Higgins \cite{CH}, but only for finite chains. In \cite{FJS}, Fernandes, Jesus, and Singha
introduced the concept of an orientation-preserving transformation on an
infinite chain. It generalizes the concept for a finite chain.
\begin{definition} \rm \cite{FJS}
Let $\alpha \in \T(X)$. We say that $\alpha$ is an \textit{orientation-preserving} transformation if there exists a non-empty subset $X_1$ of $X$ such that:\\
(1) $\alpha$ is order-preserving both on $X_1$ and on $X_2 = X\setminus X_1$; and\\
(2) for all $a \in X_1$ and $b \in X_2$, we have $a < b$ and $a\alpha \geq b\alpha$.
\end{definition}
In the present paper, we will keep this meaning of $X_1$ and $X_2$ for an orientation-preserving transformation $\alpha$ if it is clear from the
context. Since $x \leq a$ implies $x \in X_1$ for all $x \in X$ and all $a \in X_1$, the non-empty set $X_1$ is an (order) ideal of $X$.
We call the set $X_1$ an \textit{ideal of $\alpha$}.

Denote by $\OP(X)$ the subset of $\T(X)$ of all orientation-preserving transformations. In \cite{FJS}, Fernandes, Jesus, and Singha proved that $\OP(X)$ is a monoid. Moreover, they proved that if $\alpha \in \OP(X)$ is a non-constant transformation then $\alpha$ admits a unique ideal.
Clearly, $\O(X) \subseteq \OP(X)$ and we have $\alpha \in \O(X)$ if and only if $\alpha \in \OP(X)$ and $\alpha$ admits $X$ as an ideal.
In \cite{DK2}, Dimitrova and Koppitz determined the relative rank of the semigroup $\OP(X)$ modulo $\O(X)$ for some infinite chains $X$.

Given a nonempty subset $Y$ of $X$, we denote by $\T(X,Y)$ the subsemigroup $\{\alpha \in \T(X) : \im(\alpha) \subseteq Y\}$ of $\T(X)$ of all elements with range (image) in $Y$.
The semigroup $\T(X,Y)$ was introduced and studied in 1975 by Symons (see \cite{Symons}), and it is called semigroup of transformations with restricted range.
Recently, for a finite set $X$ the rank of $\T(X,Y)$ was computed by Fernandes and Sanwong \cite{FS}.

In this paper, for the set $X$ and a nonempty subset $Y$ of $X$, we consider the order-preserving and the orientation-preserving counterparts of the semigroup $\T(X,Y)$, namely the semigroup $\O(X,Y) = \T(X,Y) \cap \O(X) = \{\alpha \in \O(X) : \im(\alpha) \subseteq Y\}$ and the semigroup $\OP(X,Y) = \T(X,Y) \cap \OP(X) = \{\alpha \in \OP(X) : \im(\alpha) \subseteq Y\}$.

For a finite chain $X$ the ranks of the semigroups $\O(X,Y)$ and $\OP(X,Y)$ were determined by Fernandes, Honyam, Quinteiro, and Singha \cite{FHQS1, FHQS2}.
In \cite{TK1}, Tinpun and Koppitz studied the relative rank of $\T(X,Y)$ modulo $\O(X,Y)$.
In \cite{DKT}, Dimitrova, Koppitz, and Tinpun studied rank properties of the semigroup $\OP(X,Y)$. In \cite{DK1}, Dimitrova and Koppitz determined the relative rank of $\T(X,Y)$ modulo $\OP(X,Y)$ and the relative rank of the semigroup $\OP(X,Y)$ modulo $\O(X,Y)$.
In \cite{TK2}, Tinpun and Koppitz considered generating sets of infinite full transformation semigroups with restricted range.

We begin by recalling some notations and definitions that will be used in the paper.
For any transformation $\alpha \in \T(X,Y)$, we denote by $\ker \alpha$, $\dom \alpha$, and $\im \alpha$ the kernel, the domain, and the image (range) of $\alpha$, respectively.
The inverse of $\alpha$ as a relation is denoted by $\alpha^{-1}$. For a subset $A \subseteq \T(X,Y)$, we denote by $\langle A \rangle$ the subsemigroup of $\T(X,Y)$ generated by $A$. For a subset $C \subseteq X$, we denote by $\alpha|_C$ the restriction of $\alpha$ to $C$ and by $\id_C$ the identity mapping on $C$.
A subset $C$ of $X$ is called a \textit{convex} subset of $X$ if $z \in X$ and $x < z < y$ imply $z \in C$, for all $x,y \in C$.
Let $C$ and $D$ be convex subsets of $X$. We will write $C < D$ (respectively, $C \leq D$) if $c < d$ (respectively, $c \leq d$) for all $c \in C$ and all $d \in D$. If $C = \{c\}$ or $D = \{d\}$, we write $c < D$, $C < d$ (respectively, $c \leq D$, $C \leq d$) instead of $\{c\} < D$ or $C < \{d\}$ (respectively, $\{c\} \leq D$ or $C \leq \{d\}$). Note that $C < \emptyset$ and $\emptyset < D$.

For $A < B \subseteq X$ and $a, b \in X$ with $a < b$, $a < B$, and $A < b$, we put
$$(A,B) = \{x \in X : A < x < B\},$$
for a convex subset of $X$ which has no minimum and no maximum;
$$[a,B) = \{x \in X : a \leq x < B\},$$
for a convex subset of $X$ which has a minimum $a$ but no maximum;
$$(A,b] = \{x \in X : A < x \leq b\},$$
for a convex subset of $X$ which has no minimum but a maximum $b$; and
$$[a,b] = \{x \in X : a \leq x \leq b\},$$
for a convex subset of $X$ which has a minimum $a$ and a maximum $b$.

A convex subset of $X$ without minimum and maximum will be called \textit{open convex subset} of $X$.

Notice that if $A = \emptyset$ or $B = \emptyset$ then we have
$$(A,\emptyset) = \{x \in X : A < x\}, ~~~ (\emptyset,B) = \{x \in X : x < B\},$$
$$[a,\emptyset) = \{x \in X : a \leq x \}, ~\mbox{ and }~ (\emptyset,b] = \{x \in X : x \leq b\}.$$

\begin{proposition}\label{pr1} \rm \cite{FJS}
Let $\alpha \in \OP(X)$ with ideal $X_1$. If $X_1\alpha \cap X_2\alpha \neq \emptyset$ then $X_1\alpha \cap X_2\alpha = \{c\}$, for some $c \in X$. Moreover, in this case, $X_1\alpha$ has a minimum, $X_2\alpha$ has a maximum, and both of these elements coincide with $c$.
\end{proposition}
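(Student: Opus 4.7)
The plan is to extract everything from the second clause of the definition of orientation-preserving transformation, namely that $a\alpha \geq b\alpha$ whenever $a \in X_1$ and $b \in X_2$. This single inequality gives us the separating property $X_1\alpha \geq X_2\alpha$ pointwise, and the proposition is essentially a direct consequence.

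First I would fix an element $c \in X_1\alpha \cap X_2\alpha$ and write $c = a_0\alpha = b_0\alpha$ for some $a_0 \in X_1$ and $b_0 \in X_2$. Applying the orientation-preserving inequality to $a_0$ against an arbitrary $b \in X_2$ gives $c = a_0\alpha \geq b\alpha$, so $c$ is an upper bound of $X_2\alpha$; since $c$ itself lies in $X_2\alpha$, it is the maximum of $X_2\alpha$. Dually, applying the inequality to an arbitrary $a \in X_1$ against $b_0$ gives $a\alpha \geq b_0\alpha = c$, so $c$ is a lower bound of $X_1\alpha$ lying in $X_1\alpha$, hence the minimum of $X_1\alpha$.

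Next I would deduce uniqueness. If $c' \in X_1\alpha \cap X_2\alpha$ is another element of the intersection, then by the same reasoning $c'$ is simultaneously the maximum of $X_2\alpha$ and the minimum of $X_1\alpha$. Since maxima and minima are unique, $c' = c$, and therefore $X_1\alpha \cap X_2\alpha = \{c\}$. This simultaneously yields the moreover part: the unique point $c$ coincides with both $\min X_1\alpha$ and $\max X_2\alpha$.

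There is really no significant obstacle here; the argument is a two-line unwinding of the defining inequality, with the only subtle point being to recognize that a single element in the intersection plays both the role of witness in $X_1\alpha$ (bounding $X_2\alpha$ above) and in $X_2\alpha$ (bounding $X_1\alpha$ below). Once this symmetry is observed, minimum, maximum, and uniqueness all drop out at once.
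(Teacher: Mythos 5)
Your argument is correct: it is the direct unwinding of condition (2) in the definition, showing that any element of $X_1\alpha \cap X_2\alpha$ is simultaneously an upper bound of $X_2\alpha$ contained in $X_2\alpha$ and a lower bound of $X_1\alpha$ contained in $X_1\alpha$, from which the minimum/maximum claims and uniqueness follow at once. The paper itself states this proposition without proof, citing \cite{FJS}, so there is no in-paper argument to compare against; yours is the natural and complete one.
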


We consider a densely infinite linearly ordered set $X$ with the following two properties:
\begin{description}
  \item[(a)] if $X = X' \cup X''$ with $X' < X''$ and $X', X'' \neq \emptyset$ is a decomposition of $X$ then $X'$ has a maximum or $X''$ has a minimum;
  \item[(b)] any two open convex subsets of $X$ are order-isomorphic.
\end{description}

The set $\mathbb{R}$ of real numbers under the usual order $\leq$ is a nontrivial example for a densely infinite linearly ordered set satisfying the conditions (a) and (b).
In fact, any two open convex subsets of $\mathbb{R}$ (open intervals) are order-isomorphic. Moreover, if $\mathbb{R} = X' \cup X''$ with $X' < X''$ is a decomposition of $\mathbb{R}$ then there is an $a \in \mathbb{R}$ such that either $X' = \{x \in \mathbb{R} \mid x \leq a\}$ and $X'' = \{x \in \mathbb{R} \mid x > a\}$ or
$X' = \{x \in \mathbb{R} \mid x < a\}$ and $X'' = \{x \in \mathbb{R} \mid x \geq a\}$. In that case, $a$ is the maximum of $X'$ and the minimum of $X''$, respectively.\\

In the present paper, we determine the relative rank of $\OP(X,Y)$ modulo $\O(X,Y)$ for a nonempty proper subset $Y$ of $X$.
We consider two cases for the set $Y$: (1) $Y$ is a convex subset of $X$; (2) $Y$ is not a convex subset of $X$ but $Y$ contains an open convex subset of $X$.
For $Y = X$, we characterize the relative generating sets of minimal size.

\begin{lemma}\label{le1} \rm
Let $\alpha \in \OP(X,Y)\setminus \O(X,Y)$ with ideal $X_1$ and let $c \in X$ be the maximum of $X_1$ (respectively the minimum of $X_2$) then $c\alpha \in Y$ is the maximum of $\im\alpha$ (respectively $c\alpha \in Y$ is the minimum of $\im\alpha$).
\end{lemma}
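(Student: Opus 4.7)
My plan is to unpack the definition of an orientation-preserving transformation and use it directly. The key observation is that since $\alpha \notin \O(X,Y)$, the set $X_2 = X \setminus X_1$ is nonempty; otherwise $X_1 = X$ would make $\alpha$ order-preserving on all of $X$, contradicting the fact noted earlier that $\alpha \in \O(X)$ iff $\alpha \in \OP(X)$ with ideal $X$.

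For the first assertion, I assume that $c$ is the maximum of $X_1$ and aim to show that $c\alpha$ is the maximum of $\im\alpha$. I would split the argument by considering the two parts of the domain. For $x \in X_1$, since $\alpha|_{X_1}$ is order-preserving and $x \leq c$, I immediately get $x\alpha \leq c\alpha$. For $y \in X_2$, condition~(2) of the definition applied to $c \in X_1$ and $y \in X_2$ yields $c\alpha \geq y\alpha$. Combining these two inequalities shows $z\alpha \leq c\alpha$ for every $z \in X$, i.e.\ $c\alpha = \max(\im\alpha)$. Since $c\alpha \in \im\alpha \subseteq Y$, we also get $c\alpha \in Y$, which finishes this case.

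The second assertion, where $c$ is assumed to be the minimum of $X_2$, is completely symmetric. For $y \in X_2$, order-preservation of $\alpha|_{X_2}$ together with $c \leq y$ gives $c\alpha \leq y\alpha$. For $x \in X_1$, condition~(2) applied to $x \in X_1$ and $c \in X_2$ gives $x\alpha \geq c\alpha$. Therefore $c\alpha \leq z\alpha$ for all $z \in X$, so $c\alpha$ is the minimum of $\im\alpha$, and again $c\alpha \in Y$.

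There is no real obstacle here: the statement is essentially a direct consequence of the definition of orientation-preserving transformation. The only point that deserves mild care is noting that such a $c$ makes sense (we are given it as hypothesis) and that the argument proceeds separately on $X_1$ and $X_2$, invoking the two different parts of the definition on each piece.
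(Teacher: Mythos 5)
Your proof is correct and follows essentially the same route as the paper's: treat $x \in X_1$ via order-preservation of $\alpha|_{X_1}$ and $x \in X_2$ via condition~(2) of the definition, then argue dually for the minimum. The extra remarks (that $X_2 \neq \emptyset$ and that $c\alpha \in \im\alpha \subseteq Y$) are harmless additions to what the paper does.
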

\begin{proof}
Let $c \in X$ be the maximum of $X_1$ and let $y \in \im\alpha \subseteq Y$. Then there is $x \in X$ such that $x\alpha = y$. If $x \in X_1$ then $x\alpha \leq c\alpha$, since $c$ is the maximum of $X_1$ and $\alpha |_{X_1}$ is order-preserving. If $x \in X_2$ then $x\alpha \leq z\alpha$ for all $z \in X_1$, since $X_2\alpha \leq X_1\alpha$ ($\alpha$ is orientation-preserving). Therefore, we obtain $y\leq c\alpha$, i.e., $c\alpha$ is the maximum of $\im\alpha$.

Now, let $c \in X$ be the minimum of $X_2$. Dually, one can obtain that $c\alpha$ is the minimum of $\im\alpha$.
\end{proof}
~~\\

From Lemma \ref{le1}, we obtain:
\begin{corollary}\label{co1} \rm
Suppose $Y \subset X$ has no minimum and no maximum. Then for all $\alpha \in \OP(X,Y)\setminus \O(X,Y)$
there exists $h \in Y$ with $\{y \in Y : y > h\}\cap \im\alpha =\emptyset$ or there exists $l \in Y$ with $\{y \in Y : y < l\} \cap \im\alpha =\emptyset$.
\end{corollary}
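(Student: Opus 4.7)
The plan is to apply Lemma \ref{le1} after first using property (a) of $X$ to guarantee that one of the two pieces of the decomposition induced by $\alpha$ has an extremal element. Fix $\alpha \in \OP(X,Y)\setminus \O(X,Y)$ with ideal $X_1$ and set $X_2 = X\setminus X_1$. The preparatory step is to check that both $X_1$ and $X_2$ are non-empty: $X_1 \neq \emptyset$ by the very definition of the ideal of an orientation-preserving transformation, while $X_2 = \emptyset$ would give $X_1 = X$, and the remark following the definition of $\OP(X)$ would then force $\alpha \in \O(X,Y)$, contradicting the choice of $\alpha$. Together with $X_1 < X_2$ (from condition (2) in the definition of orientation-preservation), this shows $X = X_1 \cup X_2$ is a decomposition satisfying the hypothesis of property (a).

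Condition (a) then provides the key case split: either $X_1$ has a maximum or $X_2$ has a minimum. In the first case, let $c$ be the maximum of $X_1$; Lemma \ref{le1} gives $h := c\alpha \in Y$ as the maximum of $\im\alpha$, and hence $\{y \in Y : y > h\}\cap \im\alpha = \emptyset$. In the second case, let $c$ be the minimum of $X_2$; Lemma \ref{le1} gives $l := c\alpha \in Y$ as the minimum of $\im\alpha$, and hence $\{y \in Y : y < l\}\cap \im\alpha = \emptyset$. Either alternative of the corollary is established.

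There is essentially no obstacle in this argument; it is a direct application of (a) and Lemma \ref{le1}, and the only small verification is the non-emptiness of $X_1$ and $X_2$. The hypothesis that $Y$ has neither a minimum nor a maximum is not used in the derivation: it serves only to ensure that the conclusion is non-trivial, since if, say, $Y$ had a maximum $y_0$, then $h = y_0$ would satisfy $\{y \in Y : y > h\} = \emptyset$ vacuously for every $\alpha$.
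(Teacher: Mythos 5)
Your proof is correct and follows essentially the same route as the paper: split according to whether $X_1$ has a maximum or $X_2$ has a minimum, then apply Lemma \ref{le1} in each case. You are in fact slightly more careful than the paper, which leaves the appeal to property (a) (and the non-emptiness of $X_1$ and $X_2$) implicit, and your closing remark about the role of the hypothesis on $Y$ is accurate: it only guarantees that the omitted tails $\{y\in Y: y>h\}$ and $\{y\in Y: y<l\}$ are non-empty, which is what makes the corollary usable later.
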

\begin{proof}
Let $c \in X$ be the maximum of $X_1$. From Lemma  \ref{le1}, we have $c\alpha \in Y$ is the maximum of $\im\alpha$. Since $\im\alpha \subseteq Y$ and $Y$ has no maximum, it follows that $\{y \in Y : y > c\alpha\}\cap \im\alpha =\emptyset$.

Now, let $c \in X$ be the minimum of $X_2$. Dually, one can obtain that $\{y \in Y : y < c\alpha\}\cap \im\alpha =\emptyset$.
\end{proof}

\section{The relative generating sets of $\OP(X)$ modulo $\O(X)$ of minimal size}

In \cite{DK2}, Dimitrova and Koppitz determined the relative rank of the semigroup $\OP(X)$ modulo $\O(X)$ for certain infinite chains $X$. It remains a
characterization of the relative generating sets of $\OP(X)$ modulo $\O(X)$ of minimal size. This will be the main purpose of this section. In particular, this result will be used in the next section. Recall, Dimitrova and Koppitz have already proved:
\begin{proposition}\label{pr2} \rm \cite{DK2}\\
1) Let the set $X$ have a minimum $a$ and a maximum $b$, and let $c \in (a,b)$ and $d \in (c,b)$, i.e., $a < c < d < b$.
Then $\OP(X) = \langle \O(X),\gamma \rangle$, where
$$x\gamma = \left\{
             \begin{array}{ll}
               d, & x = a \\
               x\mu_1, & x \in (a,c) \\
               a, & x = c \\
               x\mu_2, & x \in (c,b) \\
               c, & x = b \\
             \end{array}
           \right.$$
with the order-isomorphisms $\mu_1 : (a,c) \rightarrow (d,b)$ and $\mu_2 : (c,b) \rightarrow (a,c)$.\\
\\
2) Let the set $X$ have a minimum $a$ but no maximum and let $c \in (a,\emptyset)$.
Then $\OP(X) = \langle \O(X),\gamma \rangle$, where
$$x\gamma = \left\{
             \begin{array}{ll}
               c, & x = a \\
               x\nu, & x \in (a,c) \\
               a, & x = c \\
               x\nu^{-1}, & x \in (c,\emptyset) \\
             \end{array}
           \right.$$
with the order-isomorphism $\nu : (a,c) \rightarrow (c,\emptyset)$.\\
\\
3) Let the set $X$ have no minimum but a maximum $b$ and let $c \in (\emptyset,b)$ and $l \in (\emptyset,c)$, i.e., $l < c < b$.
Then $\OP(X) = \langle \O(X),\gamma \rangle$, where
$$x\gamma = \left\{
             \begin{array}{ll}
               x\tau_1, & x \in (\emptyset,c) \\
               l, & x = c \\
               x\tau_2, & x \in (c,b) \\
               c, & x = b \\
             \end{array}
           \right.$$
with the order-isomorphisms $\tau_1 : (\emptyset,c) \rightarrow (c,b)$ and $\tau_2 : (c,b) \rightarrow (l,c)$.\\
\end{proposition}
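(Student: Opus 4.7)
The proposition has three parallel parts; I would handle them with the same template and only flag where the endpoint structure of $X$ forces a variation. For each part, the proof splits naturally into two tasks: (i) check that the exhibited $\gamma$ really lies in $\OP(X)\setminus\O(X)$, and (ii) show every $\alpha\in\OP(X)\setminus\O(X)$ can be written as a word in $\O(X)\cup\{\gamma\}$. Task (i) is a direct verification: identify the ideal $X_1^{\gamma}$ of $\gamma$ (in case 1, this is $[a,c)$ with complement $[c,b]$; in case 2, $[a,c)$ with complement $[c,\emptyset)$; in case 3, $(\emptyset,c)$ with complement $[c,b]$), use that $\mu_1,\mu_2,\nu,\tau_1,\tau_2$ are order-isomorphisms plus a check at the boundary values to see that $\gamma$ is order-preserving on each piece, verify $X_1^{\gamma}\gamma\geq X_2^{\gamma}\gamma$ elementwise, and observe that $\gamma\notin\O(X)$ via an explicit pair (e.g.\ in case 1, $a<c$ while $a\gamma=d>a=c\gamma$).

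For (ii) my plan is to prove the stronger sandwich statement: every $\alpha\in\OP(X)\setminus\O(X)$ admits a factorisation $\alpha=\tau\gamma\lambda$ with $\tau,\lambda\in\O(X)$. Let $\alpha$ have ideal $X_1^{\alpha}$ and complement $X_2^{\alpha}$. By condition (a) either $X_1^{\alpha}$ has a maximum $c'$ or $X_2^{\alpha}$ has a minimum $c''$; the two sub-cases are dual, so I would treat the first. Using condition (b) and the fact that $X_1^{\alpha}$ is an order ideal containing the minimum of $X$ (in case 1 or 2) or an open initial segment (in case 3), build an order-isomorphism of $X_1^{\alpha}$ into the first piece of the ideal of $\gamma$ (namely $[a,c)$ in cases 1--2, $(\emptyset,c)$ in case 3) and an order-isomorphism of $X_2^{\alpha}$ into the complementary piece; glue them into a single $\tau\in\O(X)$, which is order-preserving on all of $X$ because the first image is strictly below the second. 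A direct computation then shows $\tau\gamma\in\OP(X)$ with ideal $X_1^{\alpha}$, and moreover $\tau\gamma$ is injective on $X_1^{\alpha}$ and on $X_2^{\alpha}$ with images lying in prescribed convex subsets (in case 1: $X_1^{\alpha}\tau\gamma\subseteq[d,b)$ and $X_2^{\alpha}\tau\gamma\subseteq[a,c]$).

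To produce $\lambda$, define it on $X_1^{\alpha}\tau\gamma$ by $(x\tau\gamma)\lambda:=x\alpha$ for $x\in X_1^{\alpha}$ (well-defined by injectivity, order-preserving because $\alpha|_{X_1^{\alpha}}$ and the inverse of $\tau\gamma|_{X_1^{\alpha}}$ are both order-preserving), and analogously on $X_2^{\alpha}\tau\gamma$. The two partial pieces are compatible for a joint order-preserving extension because $\alpha$ is orientation-preserving, i.e.\ $X_1^{\alpha}\alpha\geq X_2^{\alpha}\alpha$, which matches the order relation between $[d,b)$ and $[a,c]$ (and its analogues in cases 2 and 3); if $X_1^{\alpha}\alpha\cap X_2^{\alpha}\alpha\neq\emptyset$, Proposition \ref{pr1} confines the overlap to a single point, which is consistent with the two partial definitions. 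Finally extend $\lambda$ to all of $X$ order-preservingly across the remaining gaps (e.g.\ $(c,d)$ in case 1) by invoking condition (b) to pick order-isomorphisms of the open convex pieces of $X\setminus\dom\lambda$ into the corresponding open convex pieces between the already-defined images; the resulting $\lambda\in\O(X)$ satisfies $\tau\gamma\lambda=\alpha$.

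The main obstacle I expect is bookkeeping rather than a genuine conceptual difficulty: one must keep the endpoint cases straight, because depending on which of $X_1^{\alpha}$, $X_2^{\alpha}$, $X_1^{\alpha}\alpha$, $X_2^{\alpha}\alpha$ possesses extrema, the convex subsets to be matched via condition (b) can be open, half-open, or closed, and an order-isomorphism between them may require gluing an endpoint to the extension supplied by (b). The dual sub-case (where $X_2^{\alpha}$ has a minimum rather than $X_1^{\alpha}$ having a maximum) is symmetric but must be verified explicitly because the roles of the two pieces of $\gamma$ switch. Once these case distinctions are handled uniformly across the three parts of the proposition, the factorisation $\alpha=\tau\gamma\lambda$ yields $\alpha\in\langle\O(X),\gamma\rangle$, completing the proof.
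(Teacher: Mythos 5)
Your overall strategy --- the sandwich factorisation $\alpha=\tau\gamma\lambda$ with $\tau,\lambda\in\O(X)$ --- is the natural one (it is the same technique this paper uses in Propositions \ref{pr3} and \ref{pr4}, where a sandwich $\widehat{\varphi}\varphi\widetilde{\varphi}$ recovers $\gamma$ from an arbitrary $\varphi$; note the paper does not prove Proposition \ref{pr2} itself but quotes it from \cite{DK2}). For parts 1) and 2) your plan can be made to work, but the step ``extend $\lambda$ to all of $X$ order-preservingly across the remaining gaps'' is not automatic and is where the real content lies: a partial order-preserving map on a chain need not admit an order-preserving extension. Concretely you must (i) fill interior gaps with a \emph{constant} $h$ satisfying $X_2^{\alpha}\alpha\le h\le X_1^{\alpha}\alpha$ (such an $h$ exists by condition (a) together with Proposition \ref{pr1}; your proposal to use order-isomorphisms onto ``the open convex pieces between the already-defined images'' can fail because those pieces may be empty), and (ii) in part 2) choose $\tau$ so that $X_1^{\alpha}\tau$ is all of $[a,c)$ whenever $X_1^{\alpha}$ has no maximum, making $X_1^{\alpha}\tau\gamma=[c,\emptyset)$ cofinal and leaving no top gap; if instead $X_1^{\alpha}\tau\gamma$ is bounded above while $X_1^{\alpha}\alpha$ is cofinal in $X$, no value of $\lambda$ on the top gap can exceed all of $X_1^{\alpha}\alpha$ and the extension is impossible.

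In part 3) this obstruction is fatal and cannot be repaired by better bookkeeping. With the printed $\gamma$ one has $\im\gamma=(c,b)\cup\{l\}\cup(l,c)\cup\{c\}=[l,b)$, which is bounded below by $l$, and for any $\theta\in\O(X)$ the set $[l,b)\theta$ is bounded below by $l\theta$. Hence every element of $\langle\O(X),\gamma\rangle$ lying outside $\O(X)$ (so that every factorisation of it contains $\gamma$) has image bounded below in $X$. But $\OP(X)\setminus\O(X)$ contains maps with coinitial image, e.g.\ $\alpha$ with ideal $X_1=(\emptyset,c]$, $\alpha|_{X_1}\equiv b$, and $\alpha|_{(c,b]}$ an order-isomorphism onto $(\emptyset,c']$ for some $c'<b$ (which exists by condition (b)). So no word in $\O(X)\cup\{\gamma\}$ equals this $\alpha$; in your construction the failure occurs at the bottom gap, where the points of $X$ below $l$ would have to be sent by $\lambda$ below every element of the coinitial set $X_2^{\alpha}\alpha$. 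Either part 3) is misquoted from \cite{DK2}, or the intended generator is the bijective dual of part 2), namely $x\gamma=x\tau_1$ on $(\emptyset,c)$, $c\gamma=b$, $x\gamma=x\tau_1^{-1}$ on $(c,b)$, $b\gamma=c$, whose image is all of $X$; with that $\gamma$ your template, corrected as above and dualised, does go through. As written, however, your proof proposal would ``prove'' a false instance, so the extension step must be regarded as a genuine gap rather than routine bookkeeping.
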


In all the cases, $\rank(\OP(X):\O(X)) = 1$ and $\{\gamma\}$ is a
relative generating set of $\OP(X)$ modulo $\O(X)$. So, we
have to indicate all singleton sets which are relative generating sets of $\OP(X)$ modulo $\O(X)$.
First, we consider the case where $X$ has a minimum and a maximum.
\begin{proposition}\label{pr3}  \rm
Suppose $X$ has a minimum $a$ and a maximum $b$, i.e., $X = [a,b]$.
Let $\varphi \in \OP(X)\setminus \O(X)$ with the ideal $X_{1}$. Then $\left\langle
\O(X),\varphi \right\rangle = \OP(X)$ if and only if, for $i \in \{1,2\}$, there
is a set $Y_{i}\subseteq X_{i}$ which is order-isomorphic to an open convex
subset of $X$ such that $\varphi|_{Y_{i}}$ is an order-isomorphism.
\end{proposition}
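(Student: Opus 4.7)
The plan is to use Proposition \ref{pr2}(1) as an anchor for both directions: for the "if" part I build an element of $\langle \O(X), \varphi \rangle$ with the structural form of the generator $\gamma$ there; for the "only if" part I conversely extract the sets $Y_1, Y_2$ from a factorisation of $\gamma$ over $\O(X) \cup \{\varphi\}$.

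For sufficiency, fix $c \in (a, b)$ and $d \in (c, b)$. Since $Y_1, Y_2, Y_1\varphi, Y_2\varphi$ are all order-isomorphic to open convex subsets of $X$ (the latter two because $\varphi|_{Y_i}$ is an order-isomorphism by hypothesis), property (b) yields order-isomorphisms $f_1 : (a, c) \to Y_1$, $f_2 : (c, b) \to Y_2$, $g_1 : Y_1\varphi \to (d, b)$, $g_2 : Y_2\varphi \to (a, c)$. Define $\alpha \in \O(X)$ by $\alpha(a) = a$, $\alpha|_{(a,c)} = f_1$, $\alpha|_{(c,b)} = f_2$, $\alpha(b) = b$, and $\alpha(c)$ equal to the maximum of $X_1$ if it exists, otherwise the minimum of $X_2$ (one of these exists by (a)); one checks $\alpha$ is order-preserving using $Y_1 \leq \alpha(c) \leq Y_2$. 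Similarly define $\beta \in \O(X)$ extending $g_1 \cup g_2$ to all of $X$ in an order-preserving way, which is possible because $Y_1\varphi \geq Y_2\varphi$ (by Proposition \ref{pr1}) and $(d, b) \geq (a, c)$. Then $\alpha\varphi\beta$ has the same restriction as $\gamma$ on $(a, c) \cup (c, b)$, namely an order-isomorphism onto $(d, b)$ and onto $(a, c)$ respectively; its values on $\{a, c, b\}$ can be brought in line with those of $\gamma$ by a further post-composition with a suitable element of $\O(X)$ (pushing the values at $a$ and $b$ onto $d$ and $c$, and the value at $c$ onto $a$). Thus $\gamma \in \langle \O(X), \varphi \rangle$, and Proposition \ref{pr2}(1) gives $\OP(X) = \langle \O(X), \gamma \rangle \subseteq \langle \O(X), \varphi \rangle$.

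For necessity, assume $\langle \O(X), \varphi \rangle = \OP(X)$, fix $\gamma$ as in Proposition \ref{pr2}(1), and write $\gamma = \alpha_0 \varphi \alpha_1 \varphi \cdots \varphi \alpha_n$ with $\alpha_i \in \O(X) \cup \{\id\}$ and $n \geq 1$. For each $k$ let $S_k$ and $S_k'$ be the images of $(a, c)$ and $(c, b)$ after the prefix $\alpha_0 \varphi \cdots \alpha_{k-1}$, that is, the sets fed into the $k$-th occurrence of $\varphi$. The order-preservation and injectivity of $\gamma|_{(a,c)}$ and $\gamma|_{(c,b)}$ force every $S_k$ and $S_k'$ to lie entirely in $X_1$ or entirely in $X_2$, and each prefix to be an order-isomorphism on the relevant interval. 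Track the relative order of $S_k$ vs.\ $S_k'$: it starts at $S_1 < S_1'$ and ends at $\gamma((a,c)) > \gamma((c,b))$, so an odd number of $\varphi$-steps produces a reversal; a reversal occurs exactly when one of $S_k, S_k'$ is in $X_1$ and the other in $X_2$, and the two possible types of reversal alternate along the sequence. Consequently the last reversal, at some index $k^*$, satisfies $S_{k^*} \subseteq X_1$ and $S_{k^*}' \subseteq X_2$. Setting $Y_1 := S_{k^*}$ and $Y_2 := S_{k^*}'$, one obtains subsets of $X_1$ and $X_2$ that are order-isomorphic to the open convex subsets $(a, c), (c, b)$ of $X$, and $\varphi|_{Y_i}$ is an order-isomorphism since the entire composition on $(a, c)$ and $(c, b)$ is injective.

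The main obstacle is the parity/alternation argument in the necessity direction: one must carefully argue that no intermediate set can straddle $X_1$ and $X_2$ (a consequence of $\gamma$ being order-preserving on each of $(a, c), (c, b)$ at every step of the composition), and then combine the strict alternation of reversal types with the parity of the total count to pin down the configuration at $k^*$. The secondary technical point is the boundary-value adjustment in the sufficiency direction, ensuring $\alpha\varphi\beta$ can be transformed into the exact generator $\gamma$ of Proposition \ref{pr2}(1) by one or two further multiplications with appropriate elements of $\O(X)$.
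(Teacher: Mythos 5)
Your sufficiency half follows the paper's strategy (pre- and post-compose $\varphi$ with order-preserving maps built from $Y_{1},Y_{2}$ so as to manufacture the generator $\gamma$ of Proposition \ref{pr2}(1)), but your recipe for the point $c$ fails in one of the two cases. If $X_{1}$ has a maximum you set $c\alpha=\max X_{1}$; then $c\alpha\varphi=\max(\im\varphi)$ by Lemma \ref{le1}, so $c\alpha\varphi$ lies above $Y_{1}\varphi$ and $\beta$ sends it above $(d,b)$. But $\gamma$ requires $c\gamma=a$, and no order-preserving post-composition can carry a value lying above $(d,b)$ down to $a$ while still mapping the image of $(a,c)$ onto $(d,b)$; so the ``further post-composition'' you invoke does not exist. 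The cure is the one the paper uses: embed the closed interval $[c,b]$ (endpoint $c$ included) into a subset $U_{2}$ of $Y_{2}\subseteq X_{2}$, so that $c$ travels with $(c,b)$ under $\varphi$, lands at the bottom of the image, and can then be collapsed onto $a$.

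The necessity half is where the argument genuinely breaks. The claim that every intermediate image $S_{k}$ of $(a,c)$ lies entirely in $X_{1}$ or entirely in $X_{2}$, and that every prefix is an order-isomorphism on $(a,c)$, is not forced by the order-preservation of $\gamma|_{(a,c)}$: one occurrence of $\varphi$ applied to a straddling set produces a single cut-and-flip inside the image of $(a,c)$, and a later occurrence of $\varphi$ can undo it, namely when the lower piece of the flipped image sits in $X_{1}$ and the upper piece in $X_{2}$. Concretely, for $X=[0,1]$ and $\varphi$ given by $x\varphi=x+1/2$ on $X_{1}=[0,1/2)$ and $x\varphi=x-1/2$ on $X_{2}=[1/2,1]$, the word $\varphi\varphi$ restricted to the straddling interval $(0,3/4)$ is the identity, hence order-preserving, although the intermediate image $[0,1/4)\cup(1/2,1)$ is flipped. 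Since you must work with whatever factorization of $\gamma$ the hypothesis supplies, both pillars of your parity-of-reversals argument can fail, and with them the conclusions that $S_{k^{*}}\subseteq X_{1}$, $S_{k^{*}}'\subseteq X_{2}$, and that these sets are order-isomorphic to open convex subsets (without order-preservation of the prefix they are merely in bijection with $(a,c)$ and $(c,b)$). The paper proves necessity by a different route: assuming no suitable $Y_{i}$ exists, it factorizes a fixed injective element of $\OP(X)\setminus\O(X)$, uses a cardinality argument to locate an occurrence of $\varphi$ whose input meets $X_{i}$ in an infinite set, and then extracts a convex subset of $X_{i}$ on which $\varphi$ is an order-isomorphism, reaching a contradiction. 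To salvage your approach you would have to control the cuts directly, for instance by showing that a straddling $S_{k}$ already yields both $Y_{1}$ and $Y_{2}$; but that again requires knowing the prefix is order-preserving on each half, which is exactly what is missing.
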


\begin{proof}
Suppose that for $i \in \{1,2\}$, there is a set $Y_{i}\subseteq X_{i}$ which
is order-isomorphic to an open convex subset of $X$ such that $\varphi|_{Y_{i}}$ is an order-isomorphism.
Let $c, d \in X$ such that $a < c < d < b$. Then there are sets $U_{1}\subseteq Y_{1}$, order-isomorphic to $[a,c)$, and $U_{2}\subseteq Y_{2}$, order-isomorphic to $[c,b]$, with respect to the order-isomorphisms $\mu_{1}$ and $\mu_{2}$, respectively. Then we define $\widehat{\varphi} : X \rightarrow X$ by
$$x\widehat{\varphi} = \left\{
\begin{array}{ll}
x\mu_{1}, &  x \in [a,c) \\
x\mu_{2}, &  x \in [c,b].
\end{array}
\right.$$
Clearly, $\widehat{\varphi} \in \O(X)$. Further, there are order-isomorphisms
\begin{eqnarray*}
\mu_{3}: (a,c)\mu_{1}\varphi &\rightarrow &(d,b) \\
\mu_{4}: (c,b)\mu_{2}\varphi &\rightarrow &(a,c).
\end{eqnarray*}%
We define $\widetilde{\varphi}: X \rightarrow X$ by
$$x\widetilde{\varphi} = \left\{
\begin{array}{ll}
a, & x \in [a,c\mu_{2}\varphi] \\
x\mu_{4}, & x \in (c,b)\mu_{2}\varphi \\
c, & x \in [b\mu_{2}\varphi, a\mu_{1}\varphi) \\
d, & x = a\mu_{1}\varphi \\
x\mu_{3}, & x \in (a,c)\mu_{1}\varphi \\
b, & x \in [c\mu_{1}\varphi,b].
\end{array}
\right.$$
Note that $(c,b)\mu_{2}\varphi < (a,c)\mu_{1}\varphi$, since $\varphi \in \OP(X)\setminus \O(X)$, $(a,c)\mu_{1} \subseteq X_{1}$, and $(c,b)\mu_{2} \subseteq X_{2}$.
Since $\mu_1, \mu_2, \mu_3, \mu_4$ as well as $\varphi|_{Y_{i}}$ ($i=1,2$) are order-isomorphisms and
$$\min X = a \leq c\mu_{2}\varphi < (c,b)\mu_{2}\varphi < [b\mu_{2}\varphi, a\mu_{1}\varphi) <  a\mu_{1}\varphi < (a,c)\mu_{1}\varphi < c\mu_{1}\varphi \leq b = \max X,$$
$$a < (a,c) = (c,b)\mu_{2}\varphi\mu_{4} < c < d < (d,b) = (a,c)\mu_{1}\varphi\mu_{3} < b,$$
we have that $\widetilde{\varphi} \in \O(X)$.

Further, we have
$$(a,c)\widehat{\varphi}\varphi\widetilde{\varphi} = (a,c)\mu_{1}\varphi\widetilde{\varphi} = (a,c)\mu_{1}\varphi\mu_{3}$$
and
$$(c,b)\widehat{\varphi}\varphi\widetilde{\varphi} = (c,b)\mu_{2}\varphi\widetilde{\varphi} = (c,b)\mu_{2}\varphi\mu_{4}.$$
This shows that
$$\widehat{\varphi}\varphi \widetilde{\varphi}|_{(a,c)} : (a,c) \rightarrow (d,b)$$
and
$$\widehat{\varphi}\varphi\widetilde{\varphi}|_{(c,b)} : (c,b) \rightarrow (a,c)$$
are order-isomorphisms. Moreover, we have
$a\widehat{\varphi}\varphi\widetilde{\varphi} = (a\widehat{\varphi})\varphi\widetilde{\varphi} = (a\mu_1)\varphi\widetilde{\varphi} = (a\mu_1\varphi)\widetilde{\varphi} = d$,
$c\widehat{\varphi}\varphi\widetilde{\varphi} = = (c\widehat{\varphi})\varphi\widetilde{\varphi} = (c\mu_2)\varphi\widetilde{\varphi} = (c\mu_2\varphi)\widetilde{\varphi} a$, and $b\widehat{\varphi}\varphi\widetilde{\varphi} = (b\widehat{\varphi})\varphi\widetilde{\varphi} = (b\mu_2)\varphi\widetilde{\varphi} = (b\mu_2\varphi)\widetilde{\varphi} = c$.
Then $\widehat{\varphi}\varphi\widetilde{\varphi}$ is the transformation $\gamma$ from Proposition \ref{pr2} case 1), which is used as relative generating
set for $\OP(X)$ modulo $\O(X)$. Since $\widehat{\varphi}\varphi\widetilde{\varphi} \in \left\langle \O(X), \varphi \right\rangle$, we can conclude that
$\left\langle \O(X), \varphi \right\rangle = \OP(X)$.\\

Conversely, suppose now that $\left\langle \O(X), \varphi \right\rangle = \OP(X)$. Suppose by way of contradiction that
there is $i \in \{1,2\}$ such that there is no subset $Z$ of $X_{i}$
which is order-isomorphic to an open convex subset of $X$ such that $\varphi|_{Z}$ is an order-isomorphism.
Let $p,q \in X$ with $a < p < q < b$. Further, let $\sigma_{1}: [a,p) \rightarrow [q,b)$ and let
$\sigma_{2}: [p,b] \rightarrow [a,p]$ be order-isomorphisms. Then let $\alpha: X \rightarrow X$ be defined by
$$x\alpha = \left\{
\begin{array}{ll}
x\sigma_{1} & x \in [a,p) \\
x\sigma_{2} & x \in [p,b].
\end{array}%
\right.$$
Clearly, $\alpha \in \OP(X)\setminus \O(X)$ and there are $\alpha_{1}, \ldots, \alpha_{n} \in \O(X) \cup \{\varphi\}$ such that
$\alpha = \alpha_{1} \cdots \alpha_{n}$. Moreover, we put $\alpha_{0} = \id_X \in \O(X)$.

Assume that $|X_{i}\cap \im(\alpha_{0} \cdots \alpha_{j-1})| < \aleph_{0}$ for all $j \in \{1,\ldots,n\}$ with
$\alpha_{j} = \varphi$. Then we obtain that the ideal of $\alpha = \alpha_{1} \cdots \alpha_{n}$ is finite or only finite elements do not belong to the ideal.
This is a contradiction, since the ideal of $\alpha$ is the infinite set $[a,p)$ and the infinite set $[p,b]$ does not belong to the ideal of $\alpha$.
Hence there is $k \in \{1,\ldots,n\}$ such that $|X_{i}\cap \im(\alpha_{0} \cdots \alpha_{k-1})| = \aleph_{0}$ and $\alpha_{k}=\varphi$.
Then there are $x_{1} < x_{2} \in X_{i} \cap
\im(\alpha_{0} \cdots \alpha_{k-1})$ such that $\alpha_{0} \cdots \alpha_{k-1}|_{\widetilde{Z}}$ is order-preserving, where
$$\widetilde{Z} = \{x(\alpha_{0}\cdots \alpha_{k-1})^{-1} : x \in (x_{1},x_{2}) \cap \im(\alpha_{0} \cdots \alpha_{k-1})\}.$$
Since $X_{i}$ is convex, we have $\widehat{Z} = (x_{1},x_{2})\cap \im(\alpha_{0} \cdots \alpha_{k-1}) \subseteq X_{i}$.
Moreover, since $\alpha = \alpha_{1} \cdots \alpha_{n}$ is injective, we obtain that $\alpha_{k}|_{\widehat{Z}}$ is an order-isomorphism. On the other hand, $\widetilde{Z}$ is a subset of $X_i$ and thus $\widetilde{Z}$ is not order-isomorphic to any open convex subset of $X$.
Hence, $\widetilde{Z}$ is not a convex set, since $\widehat{Z}$ is order-isomorphic to $\widetilde{Z}$. Thus, there are $x_{3} < x_{4} \in \widetilde{Z}$ and $z \in (x_{3},x_{4})\setminus \widetilde{Z}$.
We have $x_{3} < z < x_{4}$ and $z$ belongs to the ideal of $\alpha_{0} \cdots \alpha_{k-1}$ if and only if $\widetilde{Z}$ belongs to it.
This implies $x_{3}(\alpha_{0} \cdots \alpha_{k-1}) \leq z(\alpha_{0} \cdots \alpha_{k-1}) \leq x_{4}(\alpha_{0} \cdots \alpha_{k-1})$, since
$\alpha_{0} \cdots \alpha_{k-1}$ restricted to its ideal (and its complement, respectively) is order-preserving.
Clearly, $z(\alpha_{0} \cdots \alpha_{k-1})\in \im(\alpha_{0} \cdots \alpha_{k-1})$.
Moreover, there are $\widetilde{x}_{3}, \widetilde{x}_{4} \in \widehat{Z}$ such that
$x_{3} = \widetilde{x}_{3}(\alpha_{0} \cdots \alpha_{k-1})^{-1}$ and $x_{4} = \widetilde{x}_{4}(\alpha_{0} \cdots \alpha_{k-1})^{-1}$.
This shows that $x_{1} < \widetilde{x}_{3} = x_{3}(\alpha_{0} \cdots \alpha_{k-1}) < z(\alpha_{0} \cdots \alpha_{k-1}) < x_{4}(\alpha_{0} \cdots \alpha_{k-1}) = \widetilde{x}_{4} < x_{2}$, i.e., $z(\alpha_{0} \cdots \alpha_{k-1}) \in \widehat{Z}$. Thus $z \in \widetilde{Z}$, a contradiction.
\end{proof}
~~\\

Now, we consider the case where $X$ has a minimum but no maximum.
\begin{proposition}\label{pr4}  \rm
Suppose $X$ has a minimum $a$ but no maximum, i.e., $X = [a,\emptyset)$. Let $\varphi \in \OP(X)\setminus \O(X)$ with the ideal $X_{1}$.
Then $\left\langle \O(X), \varphi \right\rangle = \OP(X)$ if and only if, for $i\in \{1,2\}$, there
is a set $Y_{i}\subseteq X_{i}$ which is order-isomorphic to an open convex
subset of $X$ such that $\varphi|_{Y_{i}}$ is an order-isomorphism and $(Y_{1}\varphi,\emptyset) = \emptyset$.
\end{proposition}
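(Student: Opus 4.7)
The plan is to mirror Proposition 3's two-step proof, adapting to the asymmetry that $X=[a,\emptyset)$ has a minimum but no maximum. The new ingredient is the cofinality hypothesis $(Y_{1}\varphi,\emptyset)=\emptyset$, arising because the generator $\gamma$ of Proposition 2(2) maps part of its ideal onto the cofinal tail $(c,\emptyset)$ of $X$; any factorization of $\gamma$ through $\varphi$ must therefore pass through a cofinal portion of $\im\varphi$.

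For the sufficient direction, I fix $c\in(a,\emptyset)$. Using that $Y_{1}\varphi$ is cofinal and $\varphi|_{Y_{1}}$ is an order-isomorphism, I select $U_{1}\subseteq Y_{1}$ order-isomorphic to $[a,c)$ via some $\mu_{1}$ with $U_{1}\varphi$ cofinal in $X$, and $U_{2}\subseteq Y_{2}$ order-isomorphic to $[c,\emptyset)$ via some $\mu_{2}$. Then I define $\widehat{\varphi}\in \O(X)$ piecewise as $\mu_{1}$ on $[a,c)$ and $\mu_{2}$ on $[c,\emptyset)$; this is order-preserving because $U_{1}\subseteq X_{1}<X_{2}\supseteq U_{2}$. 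Next I build $\widetilde{\varphi}\in \O(X)$ that sends $U_{1}\varphi$ order-isomorphically onto $[c,\emptyset)$ with $(a\mu_{1})\varphi\mapsto c$, sends $U_{2}\varphi$ order-isomorphically onto $[a,c)$ with $(c\mu_{2})\varphi\mapsto a$, and extends to all of $X$ by the constants $a$ below $U_{2}\varphi$ and $c$ in the gap between $U_{2}\varphi$ and $U_{1}\varphi$. A direct verification (parallel to the one in Proposition 3) then shows that $\widetilde{\varphi}\in \O(X)$ and that the composition $\widehat{\varphi}\varphi\widetilde{\varphi}$ coincides with a generator $\gamma$ from Proposition 2(2), whence $\langle \O(X),\varphi\rangle = \OP(X)$.

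Conversely, for the necessary direction, assume $\langle \O(X),\varphi\rangle = \OP(X)$. The order-isomorphism part of the conditions follows by transcribing the converse argument of Proposition 3 almost verbatim, applied to a test transformation $\alpha\in \OP(X)\setminus \O(X)$ with ideal $[a,p)$, such that $\alpha|_{[a,p)}$ is an order-isomorphism onto $[q,\emptyset)$ and $\alpha|_{[p,\emptyset)}$ is an order-isomorphism onto $[a,p)$ (for chosen $p<q$ in $(a,\emptyset)$). For the cofinality condition, I apply the generating property to the surjective $\gamma$ of Proposition 2(2): writing $\gamma=\gamma_{1}\cdots\gamma_{n}$ with each $\gamma_{i}\in \O(X)\cup\{\varphi\}$ and letting $k$ be the largest index with $\gamma_{k}=\varphi$, I obtain $X=\im\gamma\subseteq(\im\varphi)\beta$ where $\beta=\gamma_{k+1}\cdots\gamma_{n}\in \O(X)$. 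If $\im\varphi$ were bounded above by some $m\in X$, then $(\im\varphi)\beta\leq m\beta$ would be bounded in $X$, contradicting the fact that $X$ has no maximum. Hence $\im\varphi$ is cofinal; combined with $X_{1}\varphi\geq X_{2}\varphi$ this forces $X_{1}\varphi$ to be cofinal as well. A refinement using property (b) --- picking within $X_{1}$ a subset order-isomorphic to an open convex subset of $X$ on which $\varphi$ is an order-isomorphism and whose image reaches arbitrarily high in $X$ --- produces the desired $Y_{1}$ satisfying $(Y_{1}\varphi,\emptyset)=\emptyset$.

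The chief technical obstacle is the careful construction of $\widetilde{\varphi}$ in the sufficient direction: because the target interval $(c,\emptyset)$ is unbounded, $\widetilde{\varphi}$ must extend an order-isomorphism from $U_{1}\varphi$ onto $[c,\emptyset)$ to all of $X$ in an order-preserving way, and this is possible precisely when $U_{1}\varphi$ is cofinal in $X$ --- which is exactly the role played by the new hypothesis $(Y_{1}\varphi,\emptyset)=\emptyset$, absent in Proposition 3.
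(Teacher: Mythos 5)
Your sufficiency argument is essentially the paper's: the same choice of $U_1\subseteq Y_1$ with cofinal image, $U_2\subseteq Y_2$, the same $\widehat{\varphi}$ and $\widetilde{\varphi}$, and the same identification of $\widehat{\varphi}\varphi\widetilde{\varphi}$ with the generator $\gamma$ of Proposition~\ref{pr2}(2); you also correctly locate where $(Y_1\varphi,\emptyset)=\emptyset$ is needed (to extend $\mu_3$ order-preservingly to all of $X$). The necessity direction, however, has a genuine gap at the cofinality condition. What your sketch actually establishes are two separate facts: (i) there exists \emph{some} $Y_1\subseteq X_1$ order-isomorphic to an open convex subset with $\varphi|_{Y_1}$ an order-isomorphism (via the Proposition~\ref{pr3}-style argument), and (ii) $X_1\varphi$ is cofinal in $X$ (via the surjectivity of $\gamma$). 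These do not combine to give a single $Y_1$ with all three properties: a transformation could be an order-isomorphism on a bounded open convex piece of $X_1$ while, on the cofinal upper part of $X_1$, having a countable, order-discrete (hence non-dense) image that is still cofinal; then (i) and (ii) both hold but no admissible $Y_1$ has cofinal image. Your closing sentence, ``a refinement using property (b) produces the desired $Y_1$,'' merely restates the claim to be proved; no mechanism is given, and this is precisely the step where the new hypothesis is earned.

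The paper closes this gap by restructuring the converse constructively rather than by contradiction. It chooses a test map $\alpha\in\OP(X)\setminus\O(X)$ whose ideal maps injectively onto a \emph{convex cofinal} set, factors $\alpha=\alpha_1\cdots\alpha_n$ over $\O(X)\cup\{\varphi\}$, takes $k$ to be the greatest index with $\alpha_k=\varphi$ and $\im(\alpha_0\cdots\alpha_{k-1})$ meeting the ideal of $\varphi$, and defines $Y_1=(m,\emptyset)(\alpha_k\cdots\alpha_n)^{-1}$, the pullback of a cofinal tail of $\im\alpha$. This one set is simultaneously contained in the ideal of $\varphi$, order-isomorphic to the open convex set $(m,\emptyset)$, mapped isomorphically by $\varphi$ (by injectivity of $\alpha$ on its ideal), and forced to have cofinal image: if some $u$ lay strictly above $Y_1\varphi$, then applying the order-preserving suffix $\alpha_{k+1}\cdots\alpha_n$ would place $u(\alpha_{k+1}\cdots\alpha_n)$ above all of $(m,\emptyset)$, which is impossible since $(m,\emptyset)$ is cofinal in $X$. (A separate, dual choice of test map handles $Y_2$.) To repair your proof you would need to import this pullback argument, or something equivalent; the two facts you derive are strictly weaker than the conclusion.
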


\begin{proof}
Suppose that for $i \in \{1,2\}$, there is a set $Y_{i}\subseteq X_{i}$ which is order-isomorphic to an open convex subset of $X$ such that $\varphi|_{Y_{i}}$ is an order-isomorphism and $(Y_{1}\varphi,\emptyset)=\emptyset$. Let $c \in (a,\emptyset)$. Then there are sets $U_{1}\subseteq Y_{1}$ with $(U_{1}\varphi,\emptyset) = \emptyset$, order-isomorphic to $[a,c)$, and $U_{2}\subseteq Y_{2}$, order-isomorphic to $[c,\emptyset)$, with respect to the order-isomorphisms $\mu_{1}$ and $\mu_{2}$, respectively.
Then we define $\widehat{\varphi}: X \rightarrow X$ by
$$x\widehat{\varphi} = \left\{
\begin{array}{ll}
x\mu_{1}, & x \in [a,c) \\
x\mu_{2}, & x \in [c,\emptyset).
\end{array}
\right.$$
Clearly, $\widehat{\varphi} \in \O(X)$. Further, there are order-isomorphisms
\begin{eqnarray*}
\mu_{3}: (a,c)\mu_{1}\varphi & \rightarrow & (c,\emptyset) \\
\mu_{4}: (c,\emptyset)\mu_{2}\varphi & \rightarrow & (a,c).
\end{eqnarray*}%
We define $\widetilde{\varphi}: X \rightarrow X$  by
$$x\widetilde{\varphi} = \left\{
\begin{array}{ll}
a, & x \in [a,c\mu_{2}\varphi] \\
x\mu_{4}, & x \in (c,\emptyset)\mu_{2}\varphi \\
c, & x\in ((c,\emptyset)\mu_{2}\varphi, a\mu_{1}\varphi] \\
x\mu_{3}, & x \in (a,c)\mu_{1}\varphi.
\end{array}
\right.$$
The transformation $\widetilde{\varphi}$ is well defined since $(U_{1}\varphi,\emptyset) = \emptyset$ and $(c,\emptyset)\mu_{2}\varphi < (a,c)\mu_{1}\varphi$ (since $\varphi \in \OP(X)\setminus \O(X)$).
Since $\mu_1, \mu_2, \mu_3, \mu_4$ as well as $\varphi|_{Y_{i}}$ ($i=1,2$) are order-isomorphisms and
$$\min X = a \leq c\mu_{2}\varphi < (c,\emptyset)\mu_{2}\varphi < ((c,\emptyset)\mu_{2}\varphi, a\mu_{1}\varphi] <  (a,c)\mu_{1}\varphi = U_{1}\varphi,$$
$$a < (a,c) = (c,\emptyset)\mu_{2}\varphi\mu_{4} < c < (c,\emptyset) = (a,c)\mu_{1}\varphi\mu_{3},$$
we have that $\widetilde{\varphi} \in \O(X)$.

Further, we have
$$(a,c)\widehat{\varphi}\varphi\widetilde{\varphi} = (a,c)\mu_{1}\varphi\widetilde{\varphi} = (a,c)\mu_{1}\varphi\mu_{3}$$
and
$$(c,\emptyset)\widehat{\varphi}\varphi\widetilde{\varphi} = (c,\emptyset)\mu_{2}\varphi\widetilde{\varphi} = (c,\emptyset)\mu_{2}\varphi\mu_{4}.$$
This shows that
$$\widehat{\varphi}\varphi\widetilde{\varphi}|_{(a,c)} : (a,c) \rightarrow (c,\emptyset)$$
and
$$\widehat{\varphi}\varphi\widetilde{\varphi}|_{(c,\emptyset)} : (c,\emptyset) \rightarrow (a,c)$$
are order-isomorphisms. Moreover, we have $a\widehat{\varphi}\varphi\widetilde{\varphi} = (a\widehat{\varphi})\varphi\widetilde{\varphi} = (a\mu_1)\varphi\widetilde{\varphi} = (a\mu_1\varphi)\widetilde{\varphi} = c$ and $c\widehat{\varphi}\varphi\widetilde{\varphi} = (c\widehat{\varphi})\varphi\widetilde{\varphi} = (c\mu_2)\varphi\widetilde{\varphi} = (c\mu_2\varphi)\widetilde{\varphi} = a$.
Then $\widehat{\varphi}\varphi\widetilde{\varphi}$ is the transformation $\gamma$ from Proposition \ref{pr2} case 2), which is used as relative generating
set for $\OP(X)$ modulo $\O(X)$.
Since $\widehat{\varphi}\varphi\widetilde{\varphi} \in \left\langle \O(X), \varphi \right\rangle$, we can conclude that $\left\langle \O(X), \varphi \right\rangle = \OP(X)$.\\

Suppose now that $\left\langle \O(X), \varphi \right\rangle = \OP(X)$.
Let $\alpha \in \OP(X)\setminus \O(X)$ with the ideal $X_{1}$ such that $\alpha|_{X_{1}}$ is injective, $X_{1}\alpha$ is convex, and $(X_{1}\alpha,\emptyset)=\emptyset$.
Then there are $\alpha_{1},\ldots ,\alpha_{n}\in \O(X)\cup \{\varphi\}$ such that $\alpha = \alpha_{1}\cdots \alpha_{n}$. Since $\alpha \notin \O(X)$,
there is $k \in \{1,\ldots,n\}$ with $\alpha_{k} = \varphi$. Without loss of generality, we can assume that $k$ is the greatest $r \in \{1,\ldots,n\}$
with $\alpha_{r}=\varphi$ and $\im(\alpha_{0}\cdots \alpha_{r-1}) \cap X_1 \neq \emptyset$, where $\alpha_0 = \id_X$.

We put $Z_{1} = \im(\alpha_{0}\cdots \alpha_{k-1})\cap X_{1}$ and observe that $Z_{1}(\alpha_{k}\cdots \alpha_{n})\geq (\im(\alpha_{0}\cdots \alpha_{k-1})\setminus Z_{1})(\alpha_{k}\cdots \alpha_{n})$. Let $m \in Z_{1}(\alpha_{k}\cdots \alpha_{n})$. Since $(X_{1}\alpha,\emptyset)=\emptyset$, we can conclude that $(m,\emptyset)\subseteq \im\alpha$, since $X_{1}\alpha$ is convex and $(X_{1}\alpha,\emptyset)=\emptyset$. Further, let $x\in (m,\emptyset)$. Because of $Z_{1}(\alpha_{k}\cdots\alpha_{n})\geq (\im(\alpha_{0}\cdots \alpha_{k-1})\setminus Z_{1})(\alpha_{k}\cdots \alpha_{n})$, we obtain $x(\alpha_{k}\cdots\alpha_{n})^{-1} \subseteq Z_{1}\subseteq X_{1}$. This shows that
$$Y_{1} = (m,\emptyset)(\alpha_{k}\cdots \alpha_{n})^{-1} \subseteq X_{1}.$$
Because of $Y_{1}\subseteq \im(\alpha_{0}\cdots \alpha_{k-1})$ and since $\alpha|_{X_{1}}$ is injective, we can conclude that $(\alpha_{k}\cdots \alpha_{n})|_{Y_{1}}$ is injective. This implies that $\alpha_{k}|_{Y_{1}}$ is injective. On the other hand $Y_{1}$ is order-isomorphic to the open convex set $(m,\emptyset)$.
Hence, $\varphi|_{Y_{1}}$ is an order-isomorphism.

Suppose by way of contradiction that $(Y_{1}\varphi,\emptyset )\neq \emptyset$. Then there is $u \in (Y_{1}\varphi,\emptyset)$. Then $k < n$ (since $(X_{1}\alpha,\emptyset)=\emptyset$) and
$u(\alpha_{k+1}\cdots \alpha_{n})\geq Y_{1}\alpha_{k}(\alpha_{k+1}\cdots \alpha _{n})$ since $u > Y_{1}\varphi$ and $\alpha_{k+1}\cdots \alpha_{n}$ is order-preserving.
Since $(m,\emptyset)\subseteq Y_{1}(\alpha_{k}\cdots\alpha_{n})$ and $(\alpha_{k+1}\cdots \alpha_{n})|_{X_{1}}$ is order-preserving, we can conclude that $u(\alpha_{k+1}\cdots \alpha_{n})\geq (m,\emptyset)$, a contradiction.

Let $\beta \in \OP(X)\setminus \O(X)$ with the ideal $X_{1}$ such that $\beta|_{X_{2}}$ is injective. Then there are $\beta_{1},\ldots,\beta_{l} \in \O(X)\cup \{\varphi\}$ such that $\beta = \beta_0\beta_{1}\cdots \beta_{l}$, where $\beta_0 = \id_X$. Since $\beta \notin \O(X)$, there is $q \in \{1,\ldots,l\}$ with $\beta_{q} = \varphi$. Without loss of generality, we can assume that $q$ is the least $p \in \{1,\ldots,l\}$ with $\beta_{p} = \varphi$ and $\im(\beta_{0}\cdots \beta_{p-1}) \cap X_2 \neq \emptyset$.

We put $Z_{2} = \im(\beta_{0}\cdots \beta_{q-1})\cap X_{2}$ and observe that $Z_{2}(\beta_{0}\cdots \beta_{q-1})^{-1}\geq (\im(\beta_{0}\cdots \beta_{q-1})\setminus Z_{2})(\beta_{0}\cdots \beta_{q-1})^{-1}$. Let $h \in Z_{2}(\beta_{0}\cdots \beta_{q-1})^{-1}$.
Further, let $x \in (h,\emptyset)$. Because of $Z_{2}(\beta_{0}\cdots \beta_{q-1})^{-1}\geq (\im(\beta_{0}\cdots \beta_{q-1})\setminus Z_{2})(\beta_{0}\cdots \beta_{q-1})^{-1}$, we obtain $x(\beta_{0}\cdots \beta_{q-1})\in Z_{2} = \im(\beta_{0}\cdots \beta_{q-1})\cap X_{2}$, i.e., $x(\beta_{0}\cdots \beta_{q-1})\in X_{2}$.
This shows that $(h,\emptyset)\subseteq X_{2}$. Since $\beta|_{X_2}$ is injective, we have that $(\beta_{0}\cdots \beta_{q-1})|_{(h,\emptyset)}$ is injective and
$\beta_{q}|_{(h,\emptyset)(\beta_{0}\cdots \beta_{q-1})}$ is injective. Hence, $Y_{2} = (h,\emptyset)(\beta_{0}\cdots \beta_{q-1})$ is order-isomorphic to the open convex set $(h,\emptyset)\subseteq X_{2}$ and $\varphi|_{Y_{2}}$ is an order-isomorphism.
\end{proof}
~~\\

One can dually consider the case where $X$ has a maximum but no minimum, i.e., the following:
\begin{proposition}\label{pr6}  \rm
Suppose $X$ has a maximum $b$ but no minimum, i.e., $X = (\emptyset,b]$. Let $\varphi \in \OP(X)\setminus \O(X)$ with the ideal $X_{1}$.
Then $\left\langle \O(X), \varphi \right\rangle = \OP(X)$ if and only if, for $i\in \{1,2\}$, there
is a set $Y_{i}\subseteq X_{i}$ which is order-isomorphic to an open convex
subset of $X$ such that $\varphi|_{Y_{i}}$ is an order-isomorphism and $(\emptyset, Y_{2}\varphi) = \emptyset$.
\end{proposition}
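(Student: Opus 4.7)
The statement is the order-reversal dual of Proposition~\ref{pr4}: reversing the order on $X$ interchanges the minimum and the maximum, swaps the roles of the ideal $X_{1}$ and its complement $X_{2}$ for orientation-preserving maps, and interchanges the condition $(S,\emptyset)=\emptyset$ (nothing above $S$) with $(\emptyset,S)=\emptyset$ (nothing below $S$). My plan is to translate the proof of Proposition~\ref{pr4} line by line under this duality, replacing the generator $\gamma$ of Proposition~\ref{pr2}(2) by the one from Proposition~\ref{pr2}(3).

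For the sufficiency direction, I would fix $c\in(\emptyset,b)$ and $l\in(\emptyset,c)$ and use property~(b) to pick $U_{1}\subseteq Y_{1}$ order-isomorphic to $(\emptyset,c]$ via an order-isomorphism $\mu_{1}$ and $U_{2}\subseteq Y_{2}$ order-isomorphic to $(c,b]$ via $\mu_{2}$, choosing $U_{2}$ downward-cofinal in $Y_{2}$ so that $(\emptyset,U_{2}\varphi)=\emptyset$ (possible since $(\emptyset,Y_{2}\varphi)=\emptyset$ and $\varphi|_{Y_{2}}$ is an order-isomorphism). I then define $\widehat{\varphi}\in\O(X)$ piecewise by $\mu_{1}$ on $(\emptyset,c]$ and $\mu_{2}$ on $(c,b]$, and, using further order-isomorphisms $\mu_{3}:(\emptyset,c)\mu_{1}\varphi\to(c,b)$ and $\mu_{4}:(c,b)\mu_{2}\varphi\to(l,c)$, I build $\widetilde{\varphi}\in\O(X)$ that acts by $\mu_{4}$ on $(c,b)\mu_{2}\varphi$ and by $\mu_{3}$ on $(\emptyset,c)\mu_{1}\varphi$, collapsing the boundary blocks to the constant values $l$, $c$, and $b$ exactly as in the construction of Proposition~\ref{pr4}. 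The hypothesis $(\emptyset,U_{2}\varphi)=\emptyset$ is what makes $\widetilde{\varphi}$ well-defined at the lower end, and the relation $(c,b)\mu_{2}\varphi<(\emptyset,c)\mu_{1}\varphi$ (which follows from $\varphi\in\OP(X)\setminus\O(X)$, $U_{1}\subseteq X_{1}$, $U_{2}\subseteq X_{2}$) guarantees order-preservation at the interior boundary. A direct computation then shows that $\widehat{\varphi}\varphi\widetilde{\varphi}$ coincides with the generator $\gamma$ of Proposition~\ref{pr2}(3), so $\langle\O(X),\varphi\rangle=\OP(X)$.

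For the necessity direction, I would introduce two test transformations in $\OP(X)\setminus\O(X)$. First, pick $\alpha$ with ideal $X_{1}$, $\alpha|_{X_{2}}$ injective, $X_{2}\alpha$ convex, and $(\emptyset,X_{2}\alpha)=\emptyset$; factor $\alpha=\alpha_{1}\cdots\alpha_{n}$ over $\O(X)\cup\{\varphi\}$ with $\alpha_{0}=\id_{X}$, and select the greatest $k$ with $\alpha_{k}=\varphi$ and $\im(\alpha_{0}\cdots\alpha_{k-1})\cap X_{2}\neq\emptyset$. Setting $Z_{2}=\im(\alpha_{0}\cdots\alpha_{k-1})\cap X_{2}$ and picking a downward-cofinal $m\in Z_{2}(\alpha_{k}\cdots\alpha_{n})$, the condition $(\emptyset,X_{2}\alpha)=\emptyset$ together with convexity of $X_{2}\alpha$ forces $(\emptyset,m)\subseteq\im\alpha$; mimicking Proposition~\ref{pr4} then gives $Y_{2}=(\emptyset,m)(\alpha_{k}\cdots\alpha_{n})^{-1}\subseteq X_{2}$ order-isomorphic to the open convex set $(\emptyset,m)$, with $\varphi|_{Y_{2}}$ an order-isomorphism and $(\emptyset,Y_{2}\varphi)=\emptyset$. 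A symmetric argument with a second test $\beta$ satisfying $\beta|_{X_{1}}$ injective, now choosing the least index $q$ with $\beta_{q}=\varphi$ and $\im(\beta_{0}\cdots\beta_{q-1})\cap X_{1}\neq\emptyset$, produces $Y_{1}\subseteq X_{1}$ with $\varphi|_{Y_{1}}$ an order-isomorphism; no extra unboundedness condition is required on $Y_{1}$.

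The main obstacle will be the careful book-keeping of the duality in the necessity proof: choosing the correct extremal index in each part, verifying $(\emptyset,m)\subseteq\im\alpha$ (which in Proposition~\ref{pr4} was $(m,\emptyset)\subseteq\im\alpha$), and confirming that downward unboundedness propagates forward through the tail $\alpha_{k+1}\cdots\alpha_{n}$ rather than being destroyed by an intermediate $\varphi$-factor acting on $X_{1}$. Each such step mirrors the argument in Proposition~\ref{pr4} with all inequalities systematically reversed.
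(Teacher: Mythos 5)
Your overall strategy -- order-dualizing the proof of Proposition~\ref{pr4} -- is exactly what the paper intends (it gives no separate argument, only the remark that the case is dual), and your treatment of the necessity half is a faithful dualization. The problem is in the sufficiency half: the generator you aim to produce is the wrong one, and your $\widetilde{\varphi}$ cannot exist as described. The order-dual of the generator of Proposition~\ref{pr2}(2) is the \emph{bijection} $\gamma'$ given by $(\emptyset,c)\to(c,b)$, $c\mapsto b$, $(c,b)\to(\emptyset,c)$, $b\mapsto c$; it is not the map $\gamma$ of Proposition~\ref{pr2}(3), which sends $c\mapsto l$ and $(c,b)$ onto $(l,c)$. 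In your construction $c\widehat{\varphi}\varphi=c\mu_{1}\varphi$ is the maximum of $U_{1}\varphi=(\emptyset,c]\mu_{1}\varphi$, hence lies \emph{above} the set $(\emptyset,c)\mu_{1}\varphi$ that $\widetilde{\varphi}$ must map onto $(c,b)$; any order-preserving $\widetilde{\varphi}$ therefore satisfies $c\mu_{1}\varphi\widetilde{\varphi}\geq (c,b)$ and cannot send this point to $l$ (nor to $c$). So the promised ``direct computation'' showing $\widehat{\varphi}\varphi\widetilde{\varphi}=\gamma$ fails. Your diagnosis of where the hypothesis enters is also off for the same reason: if the low block is sent into $(l,c)$, which is bounded below by $l$, then $\widetilde{\varphi}$ is well-defined at the lower end even when $U_{2}\varphi$ is bounded below (map everything beneath it to $l$), so your construction would ``prove'' sufficiency without the hypothesis $(\emptyset,Y_{2}\varphi)=\emptyset$ -- contradicting the necessity half.

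The repair is to target $\gamma'$ instead: keep your $\widehat{\varphi}$, take $\mu_{3}:(\emptyset,c)\mu_{1}\varphi\to(c,b)$ and $\mu_{4}:(c,b)\mu_{2}\varphi\to(\emptyset,c)$, and let $\widetilde{\varphi}$ act by $\mu_{4}$ on $(c,b)\mu_{2}\varphi$, send $[b\mu_{2}\varphi,\,\cdot\,)$ up to $(\emptyset,c)\mu_{1}\varphi$ to $c$, act by $\mu_{3}$ on $(\emptyset,c)\mu_{1}\varphi$, and send $[c\mu_{1}\varphi,b]$ to $b$. Now the hypothesis is genuinely used: $(c,b)\mu_{2}\varphi$ must be mapped \emph{onto} the downward-cofinal set $(\emptyset,c)$, so any point of $X$ below $(c,b)\mu_{2}\varphi$ would have no admissible image; downward cofinality of $U_{2}\varphi$, i.e.\ $(\emptyset,U_{2}\varphi)=\emptyset$, is exactly what rules such points out. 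One then concludes via the order-dual of Proposition~\ref{pr2}(2) rather than via Proposition~\ref{pr2}(3). Indeed, you should not rely on Proposition~\ref{pr2}(3) here at all: as printed, its $\gamma$ has $\im\gamma=[l,b)$, which is bounded below, and since $X$ has no minimum, composing with order-preserving maps can never turn a bounded-below image into a downward-cofinal one; hence $\left\langle\O(X),\gamma\right\rangle$ misses every surjective element of $\OP(X)\setminus\O(X)$, and that statement cannot serve as the endpoint of your argument.
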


\section{The relative rank of $\OP(X,Y)$ modulo $\O(X,Y)$}

In this section, we determine the relative rank of $\OP(X,Y)$ modulo $\O(X,Y)$ for certain proper subsets $Y$ of $X$.
Let $Y$ be a proper convex subset of $X$.
Notice that, the convex subsets of $Y$ are also convex subsets of $X$ and thus, any two open convex subsets of $Y$ are order-isomorphic.

\begin{proposition} \label{pr5} \rm
If $Y$ has no minimum and no maximum then $\rank(\OP(X,Y):\O(X,Y))$ is infinite.
\end{proposition}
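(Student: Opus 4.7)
The plan is by contradiction: suppose there is a finite $B \subseteq \OP(X,Y)$ with $\langle \O(X,Y) \cup B \rangle = \OP(X,Y)$, and without loss of generality $B \subseteq \OP(X,Y) \setminus \O(X,Y)$. For each $\beta \in B$, Lemma~\ref{le1} combined with property~(a) — which is mutually exclusive in the densely ordered case — gives that exactly one of the following holds: $X_1^\beta$ has a maximum $c_\beta$ and $c_\beta\beta = \max\im\beta \in Y$, or $X_2^\beta$ has a minimum $c'_\beta$ and $c'_\beta\beta = \min\im\beta \in Y$. This partitions $B = B_U \cup B_L$; the sets $H = \{c_\beta\beta : \beta \in B_U\}$ and $L = \{c'_\beta\beta : \beta \in B_L\}$ are finite subsets of $Y$. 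Since $Y$ has neither a maximum nor a minimum, I can select $h^{\ast} \in Y$ strictly above every element of $H$ and $\ell^{\ast} \in Y$ strictly below every element of $L$.

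The next step is to construct a target $\alpha \in \OP(X,Y) \setminus \O(X,Y)$ (indeed an infinite family) that no word in $\O(X,Y) \cup B$ can reproduce. In any factorisation $\alpha = \gamma_1 \cdots \gamma_n$ with $\gamma_i \in \O(X,Y) \cup B$, isolate the last $B$-generator: let $k$ be the largest index with $\gamma_k = \beta \in B$ and set $\sigma = \gamma_{k+1}\cdots\gamma_n \in \O(X,Y) \cup \{\id_X\}$. Then $\im\alpha \subseteq \sigma(\im\beta)$, and since $\sigma$ is order-preserving the one-sided boundedness of $\im\beta$ in $Y$ is inherited by $\sigma(\im\beta)$, which has maximum $\sigma(c_\beta\beta) \in Y$ if $\beta \in B_U$, or minimum $\sigma(c'_\beta\beta) \in Y$ if $\beta \in B_L$. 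The target $\alpha$ is to be built so that its image exhibits a feature — extracted from the reserved witnesses $h^{\ast}$, $\ell^{\ast}$ — that no such $\sigma(\im\beta)$ can accommodate.

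The main obstacle is the flexibility of $\sigma \in \O(X,Y)$: as $\sigma$ varies, $\sigma(c_\beta\beta)$ ranges over all of $Y$ (e.g.\ via constant $\sigma$), so a naive one-sided bound on $\max\im\alpha$ or $\min\im\alpha$ by itself does not confine $\im\alpha$. Overcoming this requires a more rigid invariant of $\im\alpha$, preserved under order-preserving post-composition; plausible candidates are the order type of $\im\alpha$ or of $Y \setminus \im\alpha$, or the cardinal structure of the kernel classes of $\alpha$ at its break point. Combining such a rigidity with an inductive analysis of the factorisation — tracing the inherited side-bound through every occurrence of a $B$-generator, and not merely the outermost one, and using the finiteness of $B$ together with the existence of the witnesses $h^{\ast}$ and $\ell^{\ast}$ inside $Y$ — is the mechanism I expect produces the contradiction, establishing that no finite $B$ can suffice.
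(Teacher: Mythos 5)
Your opening paragraph is sound and matches the paper's first step: it is essentially Corollary~\ref{co1} (each $\beta\in B$ has $\max\im\beta\in Y$ or $\min\im\beta\in Y$) together with the finiteness of $B$ to extract uniform witnesses. From there, however, the proof has a genuine gap: your second and third paragraphs are a plan, not an argument. You correctly identify the obstacle --- post-composition with an order-preserving $\sigma$ can move the one-sided bound $\sigma(c_\beta\beta)$ anywhere in $Y$, so ``$\im\alpha$ has a maximum/minimum in $Y$'' is not by itself an obstruction --- but you then only list ``plausible candidates'' (order types, kernel structure) for a rigid invariant and state that you \emph{expect} a contradiction. No such invariant is constructed, and the paper does not use one.

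What actually closes the argument are two hypotheses you never invoke. First, in this section $Y$ is a \emph{proper convex} subset of $X$, so there is a point $x\in X$ with $x<Y$ or $Y<x$. Second, the target is chosen concretely: say $x<Y$, pick $h\in Y$ above the uniform bound $b$, and take $\alpha\in\OP(X,Y)\setminus(\O(X,Y)\cup G)$ with $\im\alpha=\{y\in Y: y\le h\}$. This image meets both $\{y\in Y: y<a\}$ (as $Y$ has no minimum) and $\{y\in Y: y>b\}$, so the \emph{last} factor $\alpha_k$ of any factorisation cannot belong to the finite set of generators; hence $\alpha_k\in\O(X,Y)$. Since $k>1$, the input to $\alpha_k$ lies in $Y$, so $\im\alpha\subseteq Y\alpha_k$, and the external point gives $x\alpha_k\le\im\alpha$; thus $x\alpha_k$ would be a lower bound, inside $Y$, of a down-set of $Y$ with no minimum --- impossible, contradicting $\im\alpha_k\subseteq Y$. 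In short, the missing ideas are the use of properness of $Y$ (the point $x$ outside $Y$) and the specific choice of $\im\alpha$ as an unbounded-below down-set reaching above $b$; without them your outline does not yield the contradiction.
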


\begin{proof}
Suppose by way of contradiction that there is a finite set $G\subseteq \OP(X,Y)\setminus \O(X,Y)$ such that
$\OP(X,Y)=\left\langle \O(X,Y), G\right\rangle$. From Corollary \ref{co1} and since $G$ is a finite set, it follows that
there are $a, b \in Y$ such that either $\{y \in Y : y < a\}\cap \im\sigma =\emptyset$ or $\{y \in Y : y > b\}\cap \im\sigma
=\emptyset$ for all $\sigma \in G$.

Since $Y$ is a proper convex subset of $X$, we have that there is $x\in X$ with $x<Y$ or $Y<x$.
First, we assume that there is $x\in X$ with $x<Y$.

Let $b < h \in Y$ and let $\alpha \in \OP(X,Y)\setminus (\O(X,Y) \cup G)$ with $\im\alpha =\{y \in Y : y \leq h\}$.
Then there are $\alpha_{1},\ldots,\alpha_{k}\in \O(X,Y)\cup G$ such that
$\alpha =\alpha_{1}\cdots \alpha_{k}$, for a suitable $k > 1$ (since $\alpha \notin \O(X,Y) \cup G$). Because $\im\alpha \subseteq \im\alpha_{k}$, we can conclude that
$\alpha_{k}\notin G$, i.e., $\alpha_{k}\in \O(X,Y)$.
Then for $x<Y$, it follows that $x\alpha_{k}\leq Y\alpha_{k}$. Moreover, since $k > 1$ and $\im\alpha_{k-1} \subseteq Y$, we have that $\im(\alpha_{k}|_{Y}) \supseteq \im\alpha = \{y \in Y : y \leq h\}$. Therefore, we obtain $x\alpha_{k}\leq \{y \in Y : y \leq h\}$. Since $Y$ has no minimum, we obtain $x\alpha_{k}\notin Y$, a contradiction.

One can dually consider the case if there is $x\in X$ with $Y<x$. Then for $a > l \in Y$ and $\alpha \in \OP(X,Y)\setminus (\O(X,Y)\cup G)$ with $\im\alpha =\{y \in Y : y \geq l\}$, one obtains also a contradiction.
\end{proof}
~~\\

Further, we will consider two cases with finite relative rank. First, suppose $X$ has no maximum and no minimum.

\begin{theorem}\label{th1} \rm
Suppose $X$ has neither a maximum nor a minimum.
If $Y$ has a minimum or a maximum then $\rank(\OP(X,Y):\O(X,Y)) = 1$.
\end{theorem}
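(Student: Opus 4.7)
By the order-reversal duality on $X$ (which swaps the hypotheses ``$Y$ has a minimum'' and ``$Y$ has a maximum''), we may assume without loss of generality that $Y$ has a minimum $a$; we also implicitly take $|Y|\geq 2$, since otherwise $\OP(X,Y)=\O(X,Y)$ and the statement degenerates. The lower bound $\rank(\OP(X,Y):\O(X,Y))\geq 1$ will be immediate from the existence of the element $\sigma\in\OP(X,Y)\setminus\O(X,Y)$ constructed below; the real task is thus to exhibit a single relative generator.

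Our plan is to adapt, to the restricted-range setting, the generator $\gamma$ of Proposition~\ref{pr2} case~2, applied to $Y$ viewed as a chain with minimum $a$. Fix $c\in Y$ with $a<c$ (and, when $Y$ has a maximum $b$, also $c<b$). By property (b), there are order-isomorphisms $\mu:(\emptyset,c)\to J$ and $\nu:(c,\emptyset)\to(a,c)$, where $J$ is the open convex subset $(c,\emptyset)$ of $X$ if $Y$ has no maximum, and $J=(c,b)$ otherwise. Define $\sigma:X\to Y$ by
$$
x\sigma=\begin{cases} x\mu, & x\in(\emptyset,c),\\ a, & x=c,\\ x\nu, & x\in(c,\emptyset).\end{cases}
$$
A direct check, using Proposition~\ref{pr1}, then shows that $\sigma\in\OP(X,Y)\setminus\O(X,Y)$ with ideal $(\emptyset,c)$ and image $Y\setminus\{c\}$.

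The heart of the proof is to show that every $\alpha\in\OP(X,Y)\setminus\O(X,Y)$ factors as $\alpha=\hat\alpha\,\sigma\,\tilde\alpha$ for some $\hat\alpha,\tilde\alpha\in\O(X,Y)$. Let $X_1^\alpha$ be the ideal of $\alpha$; property (a) yields either a maximum of $X_1^\alpha$ or a minimum of $X_2^\alpha$, and by Lemma~\ref{le1} that endpoint is carried by $\alpha$ to $\max\im\alpha$ or $\min\im\alpha$, both inside $Y$. The idea is first to build $\hat\alpha\in\O(X,Y)$ that order-preservingly sends $X_1^\alpha$ into $[a,c)$ and $X_2^\alpha$ into $[c,\emptyset)$, using (b) on the open interiors (which are open convex subsets of $X$, hence order-isomorphic to $(a,c)$ and $(c,\emptyset)$ respectively) and anchoring via the endpoint supplied by (a). Composition with $\sigma$ then swaps the two halves of $Y$: the image of $X_1^\alpha$ lands inside $J$, strictly above $c$, and the image of $X_2^\alpha$ lands inside $[a,c)$. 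Finally, $\tilde\alpha\in\O(X,Y)$ is chosen to order-preservingly send $J$ back onto the ``upper'' part $X_1^\alpha\alpha$ and $[a,c)$ back onto the ``lower'' part $X_2^\alpha\alpha$; this is consistent because $X_2^\alpha\alpha\leq X_1^\alpha\alpha$ with at most one common value (Proposition~\ref{pr1}), matched by the single point $a$ that separates the two halves of $\im\sigma$.

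The main obstacle will be the endpoint bookkeeping: a case analysis is needed according to whether $X_1^\alpha$ has a maximum (as opposed to $X_2^\alpha$ having a minimum), whether $\im\alpha$ meets the minimum $a$ of $Y$, and, when $Y$ has a maximum $b$, whether $\im\alpha$ reaches $b$. Each subcase dictates how the endpoints of $X_1^\alpha$, $X_2^\alpha$ and of $\im\alpha$ are routed through $\hat\alpha$ and $\tilde\alpha$, and the verification proceeds along the same lines as the sufficiency arguments in the proofs of Propositions~\ref{pr3} and~\ref{pr4}, with Lemma~\ref{le1} and Proposition~\ref{pr1} controlling the matching of boundary values.
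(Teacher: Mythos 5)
Your construction of a single candidate generator $\sigma$ modelled on Proposition~\ref{pr2}(2) and a three-term sandwich factorization $\alpha=\hat\alpha\,\sigma\,\tilde\alpha$ is genuinely different from the paper's argument, but it fails in the case where $Y$ has a minimum but no maximum and $(Y,\emptyset)\neq\emptyset$ (recall that in Section~3 the set $Y$ is a \emph{proper} convex subset of $X$, so this case occurs, e.g.\ $X=\R$, $Y=[0,1)$). A first, repairable, problem is that your $J=(c,\emptyset)$ is a subset of $X$, not of $Y$, so $\sigma$ does not even lie in $\T(X,Y)$ unless $Y$ is cofinal in $X$; the natural repair $J=\{y\in Y: y>c\}$ gives $\im\sigma=Y\setminus\{c\}$. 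But then $\sigma$ is not a relative generator at all. Choose $\alpha\in\OP(X,Y)\setminus\O(X,Y)$ with $\im\alpha=Y$ and $X_1\alpha$ cofinal in $Y$ (for instance: $X_1=(\emptyset,e)$ mapped by an order-isomorphism onto $\{y\in Y:y>m\}$ for some $m>a$, using property (b), and $X_2=[e,\emptyset)$ mapped order-preservingly onto $[a,m]$). In any factorization $\alpha=\alpha_1\cdots\alpha_n$ over $\O(X,Y)\cup\{\sigma\}$ we have $\im\alpha\subseteq\im\alpha_n$, so $\alpha_n\neq\sigma$ since $c\notin\im\sigma$; but if $\alpha_n\in\O(X,Y)$ and $x_0\in(Y,\emptyset)$, then $x_0>\im(\alpha_1\cdots\alpha_{n-1})\subseteq Y$ forces $x_0\alpha_n$ to be an upper bound of $\im\alpha=Y$ inside $Y$, which does not exist because $Y$ has no maximum. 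This is exactly the obstruction exploited in the proof of Proposition~\ref{pr5}, and it already kills your sandwich: the final order-preserving factor $\tilde\alpha$ would have to send every point of $X$ lying above $Y$ to an upper bound of the cofinal set $X_1\alpha$ inside $Y$. So the issue is not ``endpoint bookkeeping''; for such $\alpha$ the last factor of \emph{any} factorization must be the generator itself, which therefore has to be surjective onto $Y$.

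This is precisely why the paper does not transplant the generator of Proposition~\ref{pr2} directly onto $Y$. Instead it chooses a convex copy $Z$ of $Y$ sitting inside $Y$ with a strict lower bound $y^-$ and an upper bound $y^+$ in $Y$, transports $\alpha$ (extended over $\widehat X$) into $\OP(Z)$ by conjugation, applies the bounded-chain results of Section~2 there, and only at the very last step expands $Z$ back to $Y$ by absorbing the order-isomorphism $\mu:Z\to Y$ into the final, non-order-preserving factor $\tilde\gamma\mu$; in this way the cofinal (or coinitial) part of $\im\alpha$ is produced by the generator itself rather than by a subsequent element of $\O(X,Y)$. To salvage your approach you would need at least (i) a generator that is surjective onto $Y$ (e.g.\ modifying $\sigma$ so that $(\emptyset,c)$ is mapped \emph{onto} $\{y\in Y:y\geq c\}$), and (ii) a factorization scheme in which the generator, not an order-preserving map, appears as the last factor whenever $X_1\alpha$ is cofinal in $Y$; the fixed three-term form $\hat\alpha\,\sigma\,\tilde\alpha$ cannot accommodate this.
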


\begin{proof}
Suppose that $Y$ has a maximum but no minimum.

Let $\alpha \in \OP(X,Y)\setminus \O(X,Y)$ with the ideal $X_1$.
Then $X_2\alpha \leq X_1\alpha$. We have that $X_2\alpha$ has a maximum $y_0$ or $X_1\alpha$ has a minimum $y_0$ or
we can extend $X_2\alpha < X_1\alpha$ to a decomposition $Y_{1} < Y_{2}$ with $X_1\alpha \subseteq Y_{2}$ and
$X_2\alpha \subseteq Y_{1}$. Then $Y_{1}$ has a maximum $y_{0}$ or $Y_{2}$ has a minimum $y_{0}$.
In particular, we have $h\in Y$ with $X_2\alpha \leq h \leq X_1\alpha$.

We consider now the set $\widehat{X}$ which is the set $X$ equipped with an additional maximum $x_{\max}$.

Let $\beta :\widehat{X}\rightarrow Y$ with $x\beta =x\alpha$ for all $x\in X$ and $x_{\max}\beta = h$. Clearly, $\beta \in \OP(\widehat{X},Y)$, $\beta|_{X}=\alpha$ and $\alpha = \id_X\beta$.

Let $Z$ be a convex subset of $Y$ such that there is an order-isomorphism $\mu : Z\rightarrow Y$ and there are $y^{-},y^{+}\in Y$ with $y^{-} < Z \leq y^{+}$.
Since $Y$ has a maximum and $\mu$ is a bijection, we have that $Z$ has also a maximum. Let $\max Z = z_{\max}$.

Further, we take an order-isomorphism $\nu :\widehat{X}\rightarrow Z$. In particular, $\nu \in \O(\widehat{X},Y)$. Note that $\id_{X}\nu \in O(X,Y)$.
Let $\delta = \nu^{-1}\beta\mu^{-1}.$  It is easy to verify that $\delta \in \OP(Z)$ and $\beta =\nu\delta\mu$.
From Proposition \ref{pr6}, it follows that there are $\delta_{1}, \ldots, \delta_{k} \in
\O(Z)\cup \{\gamma\}$ such that $\delta = \delta_{1}\cdots\delta_{k}$ for a certain transformation $\gamma \in \OP(Z)\setminus \O(Z)$.

For $i \in \{1, \ldots, k\}$, we can extend $\delta_{i}$ to a transformation
$\tilde{\delta}_{i}\in \OP(X,Y)$.
If $\delta_{i}\in \O(Z)$ then we construct $\tilde{\delta}_{i}$ in the
following way:
$$x\tilde{\delta}_i = \left\{
\begin{array}{ll}
y^{-}, &  x \in (\emptyset,Z) \\
x\delta_i, &  x \in Z \\
y^{+}, &  x \in (Z,\emptyset). \\
\end{array}
\right.$$
Let $x, y \in X$ with $x < y$. If $x, y \in (\emptyset,Z)$ or $x, y \in (Z,\emptyset)$ or $x, y \in Z$
then $x\tilde{\delta}_i = y\tilde{\delta}_i \in \{y^-, y^+\}$ or $x\tilde{\delta}_i \leq y\tilde{\delta}_i$.
The latter holds because of $\delta_{i}\in \O(Z)$. Let now either $x \in (\emptyset,Z)$ and $y \in Z \cup (Z,\emptyset)$ or
$x \in Z$ and $y \in (Z,\emptyset)$.
Since $\delta_{i}\in \O(Z)$, it follows that $Z\delta_{i}\subseteq Z$. Moreover, we have $y^{-} < Z \leq y^{+}$. This implies
$(\emptyset,Z)\tilde{\delta}_i = y^{-} < Z\tilde{\delta}_{i} = Z\delta_{i} \leq y^{+} = (Z,\emptyset)\tilde{\delta}_i$ and thus $x\tilde{\delta}_i \leq y\tilde{\delta}_i$.
Therefore, we obtain $\tilde{\delta}_{i}\in \O(X,Y)$.
It is easy to see that $\tilde{\delta}_{i}|_Z = \delta_{i}$, i.e., $\delta_i = \id_Z\tilde{\delta}_i$.
Notice that $\tilde{\delta}_i\mu \in \O(X,Y)$.\\
If $\delta_{i}=\gamma$ then we define $\tilde{\delta}_i = \tilde{\gamma} : X \rightarrow Y$ by
$$x\tilde{\gamma} = \left\{
\begin{array}{ll}
x\gamma, &  x \in Z \\
(z_{\max})\gamma, &  \text{otherwise.}\\
\end{array}
\right.$$
Clearly, $\tilde{\gamma} \in \OP(X,Y)$, $\tilde{\gamma}|_{Z}=\gamma$, i.e., $\gamma = \id_Z\tilde{\gamma}$.

Finally, we have
$$\alpha = \id_{X}\beta = \id_{X}\nu\delta\mu = \id_{X}\nu\delta_{1}\cdots\delta_{k}\mu =
\id_{X}\nu\id_Z\tilde{\delta}_1\id_{Z}\tilde{\delta}_2\cdots\id_Z\tilde{\delta}_k\mu.$$
Since $\id_{X}\nu\id_Z = \id_{X}\nu$, $\id_{X}\nu\tilde{\delta}_1\id_Z = \id_{X}\nu\tilde{\delta}_1$, $\ldots$,
$\id_{X}\nu\tilde{\delta}_1\cdots\tilde{\delta}_{k-1}\id_Z = \id_{X}\nu\tilde{\delta}_1\cdots\tilde{\delta}_{k-1}$,
we have $\alpha = \id_{X}\nu\tilde{\delta}_1\cdots\tilde{\delta}_k\mu$.
Moreover, we have $\id_{X}\nu \in \O(X,Y)$, $\tilde{\delta}_i \in \O(X,Y)$, if
$i\in \{1,\ldots,k\}$ with $\delta_{i}\in \O(Z)$, and $\tilde{\delta}_{i}=\tilde{\gamma}$
if $i\in \{1,\ldots,k\}$ with $\delta_{i}=\gamma$. Moreover, $\tilde{\delta}_{k}\mu \in \O(X,Y)$
if $\delta_{k}\in \O(Z)$ and $\tilde{\delta}_{k}\mu \in \OP(X,Y)$ otherwise.
Therefore, $\alpha = (\id_{X}\nu)\tilde{\delta}_1\cdots\tilde{\delta}_{k-1}(\tilde{\delta}_k\mu) \in \left\langle \O(X,Y), \tilde{\gamma}, \tilde{\gamma}\mu \right\rangle.$

We define the transformation $\eta : X \rightarrow Y$ by
$$x\eta = \left\{
\begin{array}{ll}
y^{-}, &  x \in (\emptyset,Y) \\
x\mu^{-1}, &  x \in Y \\
y^{+}, &  x \in (Y,\emptyset). \\
\end{array}
\right.$$
Clearly, $\eta \in \O(X,Y)$ and $\eta|_Y = \mu^{-1}$. Therefore, we obtain $\tilde{\gamma} = \tilde{\gamma}\id_Z = \tilde{\gamma}\mu\mu^{-1} = \tilde{\gamma}\mu(\eta|_Y) = \tilde{\gamma}\mu\eta \in \langle \O(X,Y), \tilde{\gamma}\mu \rangle$.

This shows that $\{\tilde{\gamma}\mu\}$ is a relative generating set for $\OP(X,Y)$ modulo $\O(X,Y)$ and thus $\rank(\OP(X,Y):\O(X,Y)) = 1$.

One can dually consider the case where $Y$ has a minimum but no maximum and show that $\rank(\OP(X,Y):\O(X,Y)) = 1$.

Suppose that $Y$ has both a maximum and a minimum. Then we put $Z = Y$, $y^- = y_{\min}$, $y^+ = y_{\max}$.
Clearly $\mu = \id_Y$, $\tilde{\gamma}\mu = \tilde{\gamma}$ and $\alpha = (\id_{X}\nu)\tilde{\delta}_1\cdots\tilde{\delta}_k \in \left\langle \O(X,Y), \tilde{\gamma} \right\rangle$.
Therefore, in this case $\{\tilde{\gamma}\}$ is a relative generating set for $\OP(X,Y)$ modulo $\O(X,Y)$ and $\rank(\OP(X,Y):\O(X,Y)) = 1$.
\end{proof}
~~\\

Now, we consider the case where $X$ has a minimum or a maximum.

\begin{theorem}\label{th2} \rm
Let $X$ has a minimum or a maximum. If $Y$ has a minimum or a maximum then $\rank(\OP(X,Y):\O(X,Y)) = 1$.
\end{theorem}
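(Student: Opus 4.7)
The plan is to mirror the proof of Theorem~\ref{th1}. Given $\alpha\in\OP(X,Y)\setminus\O(X,Y)$ with ideal $X_1$, I will try to exhibit $\alpha$ as a product whose only factor outside $\O(X,Y)$ is a single fixed element. Together with $\OP(X,Y)\neq\O(X,Y)$, this gives $\rank(\OP(X,Y):\O(X,Y))=1$.

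First, as in Theorem~\ref{th1}, I would pick $h\in Y$ with $X_2\alpha\leq h\leq X_1\alpha$; Lemma~\ref{le1} supplies either a maximum of $X_2\alpha$ or a minimum of $X_1\alpha$, and otherwise one extends $X_2\alpha<X_1\alpha$ to a full decomposition. Next, I enlarge $X$ by adjoining whichever of $x_{\min}$, $x_{\max}$ it lacks, obtaining a chain $\widehat X=[x_{\min},x_{\max}]$ with both extremes (no adjunction is needed if $X$ already has both). I extend $\alpha$ to $\beta:\widehat X\to Y$ by sending each adjoined extremum to $h$; then $\beta\in\OP(\widehat X,Y)$ and $\alpha=\id_X\beta$, with $\id_X$ read as the inclusion $X\hookrightarrow\widehat X$.

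Then I would choose a convex $Z\subseteq Y$ with a minimum $z^-$ and a maximum $z^+$, with $\im\beta\subseteq Z$ and $Z$ order-isomorphic to $\widehat X$ via some $\nu:\widehat X\to Z$; property~(b) makes this possible. Take $\mu:Z\to Y$ to be an order-isomorphism if $Y$ has both extremes (letting $Z=Y$, $\mu=\id_Y$ if convenient), and the inclusion $Z\hookrightarrow Y$ otherwise; in the latter case $\mu^{-1}$ is still defined on $\im\beta\subseteq Z$, which is all that is needed. Set $\delta=\nu^{-1}\beta\mu^{-1}\in\OP(Z)\setminus\O(Z)$ and apply Proposition~\ref{pr3} to $Z$: this yields a specific $\gamma\in\OP(Z)\setminus\O(Z)$ and a factorization $\delta=\delta_1\cdots\delta_k$ with $\delta_i\in\O(Z)\cup\{\gamma\}$.

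Finally, I extend each $\delta_i$ to $\tilde\delta_i:X\to Y$ as in Theorem~\ref{th1}: if $\delta_i\in\O(Z)$, pad outside $Z$ by $z^-$ (below $Z$) and $z^+$ (above $Z$); if $\delta_i=\gamma$, set $\tilde\gamma=\gamma$ on $Z$ and $\tilde\gamma(x)=z^+\gamma$ for $x\notin Z$. A routine check parallel to the one in Theorem~\ref{th1} shows $\tilde\delta_i\in\O(X,Y)$ and $\tilde\gamma\in\OP(X,Y)$. Then $\alpha=(\id_X\nu)\tilde\delta_1\cdots(\tilde\delta_k\mu)\in\langle\O(X,Y),\tilde\gamma\mu\rangle$, and when $Y$ has both extremes $\tilde\gamma\mu$ collapses to $\tilde\gamma$. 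The hard part will be verifying, in the subcases where $Y$ lacks one extremum, that $\mu$ treated as a mere order-embedding still allows the composition $\nu^{-1}\beta\mu^{-1}$ to be formed on all of $Z$ --- this reduces to choosing $Z$ large enough to contain $\im\beta$, which is possible because $\im\alpha\cup\{h\}$ is already bounded on one side by the existing extremum of $Y$ and on the other by $h$ itself.
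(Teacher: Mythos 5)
Your reduction works when $Y$ has both a minimum and a maximum, but it breaks down exactly in the cases where $Y$ has only one of them --- which is where you yourself locate ``the hard part''. After adjoining the missing extremum to $X$, your $\widehat X$ has both a minimum and a maximum, so any $Z$ order-isomorphic to $\widehat X$ must have both $z^-=\min Z$ and $z^+=\max Z$; you then need $\im\beta\subseteq Z$. But $\im\beta=\im\alpha\cup\{h\}$ need not be bounded inside $Y$ on the side where $Y$ lacks an extremum. Concretely, take $X=\{x\in\R : x\geq 0\}$, $Y=(1,2]$ (a maximum, no minimum), $X_1=[0,5]$, $X_2=(5,\emptyset)$, and let $\alpha$ map $X_1$ order-preservingly onto $[1.5,2]$ and $X_2$ order-isomorphically onto $(1,1.5)$. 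Then $\alpha\in\OP(X,Y)\setminus\O(X,Y)$ and $\im\alpha\supseteq(1,1.5)$ is coinitial in $Y$, so every convex $Z\subseteq Y$ containing $\im\beta$ fails to have a minimum, and no admissible $Z$ exists. Your stated justification --- that $\im\alpha\cup\{h\}$ is bounded ``on the other side by $h$ itself'' --- is incorrect: $h$ satisfies $X_2\alpha\leq h\leq X_1\alpha$, so it sits between the two pieces of $\im\alpha$ and bounds neither end of it. (A smaller point: for the factorization of $\delta$ you want the existence statement of Proposition~\ref{pr2}, case 1, rather than the characterization in Proposition~\ref{pr3}.)

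The paper's proof goes the opposite way: instead of adjoining extremes to $X$ it deletes them, setting $\widehat X=X\setminus\{a\}$ (respectively $X\setminus\{a,b\}$), transports $\alpha$ to $\beta\in\OP(\widehat X,Y\mu)$ via an order-isomorphism $\mu$ of $X$ onto a convex copy $\widetilde X$ avoiding the endpoints, and then invokes Theorem~\ref{th1} --- whose hypothesis that the ambient chain has neither a maximum nor a minimum is now satisfied --- before conjugating the resulting factorization back by $\mu$. That reduction never has to fit $\im\beta$ into a two-ended interval of $Y$, which is exactly what sidesteps the obstruction above; I would restructure your argument along those lines rather than trying to force the two-endpoint case of Proposition~\ref{pr2} to apply directly.
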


\begin{proof}
Let $X$ has a minimum $a$ but no maximum.
Let $\widetilde{X}$ be a proper convex subset of $X$ with $a\notin \widetilde{X}$ and $(\widetilde{X},\emptyset)=\emptyset$, which is order-isomorphic to $X$ with
the order-isomorphism $\mu : X \rightarrow \widetilde{X}$.
Further, let $\widehat{X} = X \setminus \{a\}$.
Let $\alpha \in \OP(X,Y)\setminus \O(X,Y)$. Then we define
$\beta : \widehat{X}\rightarrow Y\mu$ by
$$x\beta =\left\{
\begin{array}{cc}
x\mu^{-1}\alpha\mu  & \mbox{ for } x\in \widetilde{X} \\
a\alpha\mu  & \mbox{ otherwise. }
\end{array}
\right.$$
It is easy to see that $\beta \in \OP(\widehat{X}, Y\mu)$. We also see that $\beta|_{\widetilde{X}} = \mu^{-1}\alpha\mu $. This provides
$\alpha = \mu(\beta|_{\widetilde{X}})\mu^{-1} = \mu\beta\mu^{-1}$. Note that $\widehat{X}$ has neither a maximum nor a minimum and that $Y\mu$ has a minimum or a maximum. We can apply Theorem \ref{th1} and obtain that there are
$\beta_{1},\ldots,\beta_{n}\in \O(\widehat{X}, Y\mu)\cup \{\gamma\}$ for a certain $\gamma \in \OP(\widehat{X}, Y\mu)\setminus \O(\widehat{X}, Y\mu)$
such that $\beta =\beta_{1}\cdots \beta_{n}$. Further, we have $\mu^{-1}\mu = \id_{\widetilde{X}}$. Since $\im\beta_{i}\subseteq \widetilde{X}$ for $1\leq i\leq n$, we obtain $$\beta = \beta_{1}\mu^{-1}\mu\beta_{2}\mu^{-1}\mu \cdots \beta_{n-1}\mu^{-1}\mu\beta_{n}$$
and thus
$$\alpha = \mu\beta\mu^{-1} = \mu\beta_{1}\mu^{-1}\mu\beta_{2}\mu^{-1}\mu \cdots \beta_{n-1}\mu^{-1}\mu\beta_{n}\mu^{-1}.$$
For $i \in \{1,\ldots,n\}$, we have $\im\beta_{i}\subseteq Y\mu$ and $\im(\beta_{i}\mu^{-1})\subseteq Y\mu\mu^{-1}= Y\id_X = Y$.
Hence, $\mu \beta_{i}\mu^{-1}\in \OP(X,Y)$. Further, if $\beta_{i}\in \O(\widehat{X}, Y\mu)$ then it is easy to verify that $\mu\beta_{i}\mu^{-1}\in \O(X,Y)$.
Therefore, $\alpha \in \langle \O(X,Y), \mu\gamma\mu^{-1} \rangle$, i.e., $\{\mu\gamma\mu^{-1}\}$ is a relative generating set of $\OP(X,Y)$ modulo $\O(X,Y)$.

One can dually consider the case where $X$ has a maximum but no minimum and show that $\rank(\OP(X,Y):\O(X,Y)) = 1$.

Suppose that $X$ has both a minimum $a$ and a maximum $b$, i.e., $X = [a,b]$. Then we take $\widetilde{X}$ to be a proper convex subset of $X$ with $a, b \notin \widetilde{X}$, which is order-isomorphic to $X$ with the order-isomorphism $\mu : X \rightarrow \widetilde{X}$. Moreover, we put $\widehat{X} = X \setminus \{a, b\}$ and
$\beta \in \OP(\widehat{X}, Y\mu)$ with
$$x\beta =\left\{
\begin{array}{cc}
a\alpha\mu  & x < \widetilde{X} \\
x\mu^{-1}\alpha\mu  & \mbox{ for } x\in \widetilde{X} \\
b\alpha\mu  & x > \widetilde{X}.\\
\end{array}
\right.$$
Using the same method as above, we obtain that $\alpha = \mu\beta\mu^{-1} \in \langle \O(X,Y), \mu\gamma\mu^{-1}\rangle$ and thus, $\rank(\OP(X,Y):\O(X,Y)) = 1$.
\end{proof}
~~\\

Finally, we consider the case where $Y$ has a minimum and a maximum.

\begin{theorem}\label{th3} \rm
Let $X$ has a minimum or a maximum and let $Y$ be a subset of $X$ with a minimum
and a maximum such that $X$ is order-isomorphic to a subset of $Y$. Then $\rank(\OP(X,Y):\O(X,Y)) = 1$.
\end{theorem}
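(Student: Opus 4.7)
The plan is to give an exact analogue of the proof of Theorem \ref{th2}, but with Theorem \ref{th2} itself taking over the role that Theorem \ref{th1} played there. The embedding hypothesis ``$X$ is order-isomorphic to a subset of $Y$'' is precisely what allows this recursion: it guarantees that the conjugated problem has its restricted range sitting inside a copy of the universe, so the hypotheses of Theorem \ref{th2} apply again after transport.

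First I would fix an order-isomorphism $\mu: X \rightarrow \tilde{X}$ with $\tilde{X} \subseteq Y$. Since $\mu$ is an order-isomorphism, $\tilde{X}$ inherits from $X$ dense infinity, the conditions (a) and (b), and the existence of a minimum or a maximum. The set $Y\mu = \mu(Y) \subseteq \tilde{X}$ has a minimum $(\min Y)\mu$ and a maximum $(\max Y)\mu$, so Theorem \ref{th2} applies to the chain $\tilde{X}$ with restricted range $Y\mu$, giving $\rank(\OP(\tilde{X}, Y\mu) : \O(\tilde{X}, Y\mu)) = 1$; fix $\gamma \in \OP(\tilde{X}, Y\mu) \setminus \O(\tilde{X}, Y\mu)$ with $\OP(\tilde{X}, Y\mu) = \langle \O(\tilde{X}, Y\mu), \gamma \rangle$.

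Next I would show that $\{\mu\gamma\mu^{-1}\}$ is a relative generating set of $\OP(X,Y)$ modulo $\O(X,Y)$. For $\alpha \in \OP(X,Y) \setminus \O(X,Y)$, set $\beta = \mu^{-1}\alpha\mu: \tilde{X} \rightarrow Y\mu$. Conjugation by an order-isomorphism preserves the orientation-preserving property and reflects order-preservation, so $\beta \in \OP(\tilde{X}, Y\mu) \setminus \O(\tilde{X}, Y\mu)$, hence admits a factorisation $\beta = \beta_1\beta_2\cdots\beta_n$ with $\beta_i \in \O(\tilde{X}, Y\mu) \cup \{\gamma\}$. Using $\mu\mu^{-1} = \id_X$ one checks $\alpha = \mu\beta\mu^{-1}$, and since $\im\beta_i \subseteq Y\mu \subseteq \tilde{X}$ and $\mu^{-1}\mu = \id_{\tilde{X}}$, inserting identities between successive factors yields
$$\alpha = \bigl(\mu\beta_1\mu^{-1}\bigr)\bigl(\mu\beta_2\mu^{-1}\bigr)\cdots\bigl(\mu\beta_n\mu^{-1}\bigr).$$
Each factor $\mu\beta_i\mu^{-1}: X \rightarrow Y$ is well-defined because $X\mu = \tilde{X}$, $\tilde{X}\beta_i \subseteq Y\mu$, and $\mu^{-1}$ maps $Y\mu$ bijectively onto $Y$; if $\beta_i \in \O(\tilde{X}, Y\mu)$ then $\mu\beta_i\mu^{-1} \in \O(X,Y)$, and $\mu\gamma\mu^{-1} \in \OP(X,Y)\setminus\O(X,Y)$. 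Thus $\alpha \in \langle \O(X,Y), \mu\gamma\mu^{-1} \rangle$, so $\rank(\OP(X,Y):\O(X,Y)) \leq 1$; equality follows from $\O(X,Y) \subsetneq \OP(X,Y)$, witnessed by $\mu\gamma\mu^{-1}$.

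The main technical obstacle will be the careful bookkeeping of domains and codomains: one must ensure that $\mu^{-1}$ is only ever applied to elements of $\tilde{X}$, that the inserted identities $\mu^{-1}\mu$ genuinely act trivially on all intermediate images, and that the lifted factors indeed land in $\O(X,Y)$ or $\OP(X,Y)$. All of these hinge on the single containment $Y\mu \subseteq \tilde{X}$, which is exactly what the hypothesis $\tilde{X} \subseteq Y$ gives us; once this is unpacked, the decomposition of $\alpha$ is purely mechanical.
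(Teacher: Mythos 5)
Your factorisation bookkeeping at the end is fine, but the proof collapses at its first real step. The pair $(\tilde{X},Y\mu)$ is the image of the pair $(X,Y)$ under the order-isomorphism $\mu$, so conjugation by $\mu$ is a semigroup isomorphism carrying $\OP(X,Y)$ onto $\OP(\tilde{X},Y\mu)$ and $\O(X,Y)$ onto $\O(\tilde{X},Y\mu)$. Hence the assertion $\rank(\OP(\tilde{X},Y\mu):\O(\tilde{X},Y\mu))=1$ is literally equivalent to the statement you are trying to prove, and invoking Theorem \ref{th2} for it amounts to claiming that Theorem \ref{th2} already covers the hypotheses of Theorem \ref{th3}. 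It does not: Theorems \ref{th1} and \ref{th2} are established under the standing assumption, declared at the top of Section 3, that the range is a proper \emph{convex} subset of the chain (their proofs use that convex subsets of $Y$ are convex in $X$, that $Y$ contains a bounded convex copy $Z$ of itself, etc.). In Theorem \ref{th3} the set $Y$ need not be convex --- by the introduction this theorem is aimed precisely at case (2), where $Y$ is not convex --- and $Y\mu$ is convex in $\tilde{X}$ if and only if $Y$ is convex in $X$. So Theorem \ref{th2} cannot be applied to $(\tilde{X},Y\mu)$, and your reduction is circular; it only ``works'' in the convex case, where there is nothing left to prove. Note the contrast with the proof of Theorem \ref{th2} itself: there the conjugated map $\beta$ is extended to a transformation of $\widehat{X}=X\setminus\{a\}$, a strictly larger chain than $\widetilde{X}$ with different endpoint behaviour, so the reduction to Theorem \ref{th1} is genuine. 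Your $\beta$ lives on $\tilde{X}$ itself, and nothing changes.

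The missing idea is to decouple the orientation-reversal from the (possibly non-convex) range. The paper fixes $\widetilde{Y}\subseteq Y$ order-isomorphic to $X$, obtains from Theorem \ref{th2} a single $\gamma$ with $\OP(X,\widetilde{Y})=\langle\O(X,\widetilde{Y}),\gamma\rangle$ (the chain here is still $X$, and $\widetilde{Y}$ can be chosen so that Theorem \ref{th2} is applicable), and then, for $\alpha\in\OP(X,Y)$ with ideal $X_1$, writes $\alpha=\mu\widetilde{\mu}$, where $\mu\in\OP(X,\widetilde{Y})$ is pieced together from order-isomorphisms $\varphi_1,\varphi_2$ of $X_1,X_2$ into $\widetilde{Y}$ with $X_2\varphi_2<X_1\varphi_1$ (this factor absorbs the orientation-reversal and lies in $\langle\O(X,Y),\gamma\rangle$), while $\widetilde{\mu}\in\O(X,Y)$ sends $x\varphi_i\mapsto x\varphi_i^{-1}\alpha$ and fills the remaining gaps with the constants $y_{\min}$, $m$, $y_{\max}$ (this order-preserving factor produces the actual image inside $Y$). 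Some such two-factor decomposition is unavoidable; a pure conjugation argument cannot reach the non-convex case.
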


\begin{proof}
Let $\widetilde{Y}\subseteq Y$ such that $X$ is order-isomorphic to $\widetilde{Y}$. By Theorem \ref{th2}, there is $\gamma \in \OP(X, \widetilde{Y})$ such that $\OP(X,\widetilde{Y}) = \langle \O(X,\widetilde{Y}),\gamma \rangle$. Since $\OP(X,\widetilde{Y})\subseteq \OP(X,Y)$, we have $\gamma \in \OP(X,Y)$. We will show that $\OP(X,Y)\subseteq \langle \O(X,Y),\gamma\rangle$. For this let $\alpha \in \OP(X,Y)$ with the ideal $X_{1}$. Then there
are order-isomorphisms
$$\begin{array}{ccc}
\varphi_{1}: & X_{1}\rightarrow \widetilde{Y} & \text{and} \\
\varphi_{2}: & X_{2}\rightarrow \widetilde{Y} &
\end{array}$$
such that $X_{2}\varphi_{2} < X_{1}\varphi_{1}$. We define a transformation $\mu : X\rightarrow \widetilde{Y}$ by
$$x\mu = \left\{
\begin{array}{ll}
x\varphi_{1}, & x \in X_{1} \\
x\varphi_{2}, & x \in X_{2}.
\end{array}%
\right.$$
Clearly, $\mu \in \OP(X,\widetilde{Y})$ with the same ideal $X_1$ as $\alpha$. If $X_{1}$ has a minimum then we put $m =(\min X_{1})\alpha$.
Otherwise, we put $m = (\max X_{2})\alpha$. Thus, $X_{2}\alpha \leq m \leq X_{1}\alpha$. Now, we define a transformation $\widetilde{\mu}: X \rightarrow Y$ by
$$x\widetilde{\mu} = \left\{
\begin{array}{ll}
y_{\min}, & x < X_{2}\varphi_{2} \\
x\varphi_{2}^{-1}\alpha, & x \in X_{2}\varphi_{2} \\
m, & X_{2}\varphi_{2} < x < X_{1}\varphi_{1} \\
x\varphi_{1}^{-1}\alpha, & x \in X_{1}\varphi_{1} \\
y_{\max}, & x > X_{1}\varphi_{1},
\end{array}
\right.$$
where $y_{\min}$ and $y_{\max}$ is the minimum and the maximum, respectively, in $Y$.
Since $y_{\min}\leq Y \supseteq X_{2}\varphi_{2}\varphi_{2}^{-1}\alpha = X_{2}\alpha \leq m \leq X_{1}\alpha = X_{1}\varphi_{1}\varphi_{1}^{-1}\alpha \subseteq Y \leq y_{\max}$, $\varphi_{1}$ and $\varphi_{2}$ are order-isomorphisms, and $\alpha$ is order-preserving both on $X_1$ and on $X_2$, we have $\widetilde{\mu} \in \O(X,Y)$.
From $\mu \in \OP(X,\widetilde{Y}) = \langle \O(X,\widetilde{Y}),\gamma \rangle \subseteq \langle \O(X,Y),\gamma \rangle$, we obtain $\mu\widetilde{\mu} \in
\langle \O(X,Y),\gamma \rangle$. It remains to show that $\alpha = \mu\widetilde{\mu}$. For this let $x \in X_{1}$. Then $x\mu\widetilde{\mu} = x\varphi_{1}\widetilde{\mu} = x\varphi_{1}\varphi_{1}^{-1}\alpha = x\id_{X_1}\alpha = x\alpha$. Analogously, we obtain $x\mu\widetilde{\mu} = x\alpha$, whenever $x \in X_{2}$.

Therefore, $\alpha \in \langle \O(X,Y), \gamma \rangle$, i.e., $\{\gamma\}$ is a relative generating set of $\OP(X,Y)$ modulo $\O(X,Y)$. Since $\O(X,Y)$ is a proper subsemigroup of $\OP(X,Y)$ we have $\rank(\OP(X,Y) : \O(X,Y)) = 1$.
\end{proof}

\end{document}